\newtheorem{thm}{Theorem}
\newtheorem{prop}{Proposition}
\newtheorem{lem}{Lemma}
\newtheorem{cor}{Corollary}
\newtheorem{rem}{Remark}
\newtheorem{exam}{Example}
\begin{document}
\title{A fast algorithm for globally solving Tikhonov regularized  total least squares problem
\thanks{This research was supported by NSFC under grants 11571029,
11471325 and 11771056, and by fundamental research funds
 for the Central Universities under
grant YWF-17-BJ-Y-52.}
}

\titlerunning{Solving Tikhonov regularized total least squares}        

\author{ Yong Xia  \and Longfei Wang  \and   Meijia Yang }


\institute{
Y. Xia \and L.F. Wang \and M.J. Yang  \at
              State Key Laboratory of Software Development
              Environment, LMIB of the Ministry of Education,
              School of
Mathematics and System Sciences, Beihang University, Beijing,
100191, P. R. China
\email{dearyxia@gmail.com (Y. Xia); kingdflying999@126.com (L.F. Wang);15201644541@163.com (M.J. Yang, Corresponding author)}
 }

\date{Received: date / Accepted: date}

\maketitle

\begin{abstract}
The total least squares problem with the general Tikhonov regularization can be reformulated as a one-dimensional parametric minimization problem (PM), where each parameterized function evaluation corresponds to solving an $n$-dimensional trust region subproblem. Under a mild assumption, the parametric function is differentiable and then an efficient bisection method has been proposed for solving (PM) in literature. In the first part of this paper, we show that the bisection algorithm can be greatly improved by reducing the initially estimated interval covering the optimal parameter.
It is observed that the bisection method cannot  guarantee to find the globally optimal solution since the nonconvex (PM) could have a local non-global minimizer. The main contribution of this paper is to propose an efficient branch-and-bound algorithm for globally solving (PM), based on a novel underestimation of the parametric function over any given interval using only the information of the parametric function evaluations at the two endpoints. We can show that the new algorithm(BTD Algorithm) returns a global $\epsilon$-approximation solution in a computational effort of at most $O(n^3/\sqrt{\epsilon})$ under the same assumption as in the bisection method. The numerical results demonstrate that our new global optimization algorithm performs even much faster than the improved version of the bisection heuristic algorithm.

 \keywords{Total least squares \and Tikhonov regularization
 \and Trust region subproblem \and Fractional program
 \and Lower bound \and Branch and bound }
\subclass{65F20, 90C26, 90C32, 90C20}
\end{abstract}

\section{Introduction}
In order to handle the overestimated linear equations $Ax\approx b$ with the noised data matrix $A\in \Bbb R^{m\times n}$ and the noised observed vector $b\in \Bbb R^m$, the total least squares (TLS) approach was firstly proposed in \cite{Golub1980} by solving the following optimization problem:
\begin{equation}
\min_{E\in \Bbb R^{m\times n},r\in \Bbb R^{m},x\in \Bbb R^{n}} \left\{ \|E\|_{\rm F}^2+\|r\|^2:~ (A+E)x=b+r\right\}, \label{Erx}
\end{equation}
where $\|\cdot\|_{\rm F}$ and $\|\cdot\|$ denote the Frobenius norm and the Euclidean norm, respectively, $E$ and $r$ are the perturbations. For more details, we refer to \cite{Golub1996,HL2002,HV1991} and references therein.
Let $(E^*,r^*,x^*)$ be an optimal solution to the above minimization problem (\ref{Erx}). It can be verified that $E^*$ and $r^*$ have a closed-form expression in terms of $x^*$ as the problem (\ref{Erx}) is
a linear-equality constrained convex quadratic program with respect to $E$ and $r$. Therefore, by eliminating $E$ and $r$ from (\ref{Erx}), we obtain the following equivalent quadratic fractional program:
\begin{equation}
\min_{x\in \Bbb R^n}\frac{\|Ax-b\|^2}{\|x\|^2+1},\label{onlyx}
\end{equation}
which can be easily solved by finding the smallest singular value and the corresponding vector of the
augmented matrix $[A~ b]$ if it has a full column rank, see \cite{Golub1980,HV1991}.

For the ill-conditioned (TLS) problem, Tikhonov regularization \cite{T77}  is an efficient way to stabilize the solution by appending a quadratic penalty to the objective function:
\begin{equation}
\min_{E\in \Bbb R^{m\times n},r\in \Bbb R^{m},x\in \Bbb R^{n}} \left\{ \|E\|_{\rm F}^2+\|r\|^2+\rho\|Lx\|^2:~ (A+E)x=b+r \right\},
\label{Erx2}
\end{equation}
where $\rho > 0$ is the penalty parameter, and $L\in \Bbb R^{k\times n}$ $(k\leq n)$ is the particularly chosen regularization matrix of full row-rank. It is worth noting that the model (\ref{Erx2}) also works for the underestimated linear system $Ax=b$. Similar to (\ref{Erx})-(\ref{onlyx}), by eliminating the variables $E$ and $r$, we can recast (\ref{Erx2}) as the following optimization problem with respect to $x$ \cite{B06,JH2013}:
\begin{equation}
({\rm P})~~\min_{x\in \Bbb R^n}\frac{\|Ax-b\|^2}{\|x\|^2+1}+\rho\|Lx\|^2.
\nonumber
\end{equation}
The objective function in (P) is non-convex and has local non-global minimizers.
Consequently, it is difficult to solve (P) to the global optimality.

Let $F\in \Bbb R^{n\times (n-k)}$ be a matrix whose columns form an orthogonal basis of the null space of  $L$. Throughout this paper, we make the following assumption, which was firstly presented in \cite{B06}:
\begin{equation}
{\rm either}~k=n~{\rm or}~\lambda_{\min}\left[ \begin{array}{cc}F^TA^TAF& F^TA^Tb\\b^TAF& \|b\|^2\end{array}\right]<\lambda_{\min}\left(F^TA^TAF\right),\label{as}
\end{equation}
where $\lambda_{\min}(\cdot)$ is the minimal eigenvalue of $(\cdot)$. As shown in \cite{B06}, it is a sufficient condition under which the minimum of (P) is attained. The assumption (\ref{as}) is also essential in an extended version of (P), see \cite{B09}.

It is not difficult to verify that (P) can be equivalently rewritten as the following one-dimensional parametric optimization problem \cite{B06}:
\begin{equation}
(\rm PM)~~\min_{\alpha \geq 1} \left\{\mathcal{G}(\alpha):=\min_{\|x\|^2=\alpha-1}~\left\{ \frac{\|Ax-b\|^2}{\alpha}+\rho \|Lx\|^2 \right\}\right\}, \label{1dim}
\end{equation}
where evaluating the function value $\mathcal{G}(\alpha)$ corresponds to solving an equality version of the trust-region subproblem (TRS) \cite{C00,G80,M83}.
It is shown in \cite{B06} that $\mathcal{G}(\alpha)$ is continuous. Under a mild condition, it is also differentiable. Then, a bisection method is suggested in \cite{B06} to solve the equation $\mathcal{G}'(\alpha)=0$ based on solving a
sequence of (TRS), denoted by Algorithm TRTLSG. It converges to the global minimizer if the function $\mathcal{G}(\alpha)$ is unimodal, which is true  when $L=I$. Since there are exceptional examples \cite{B06} to show that
$\mathcal{G}(\alpha)$ is not always unimodal,
Algorithm TRTLSG remains a heuristic algorithm as it does not guarantee the convergence to the global minimizer of (P).

Let $x^*$ be a globally optimal solution to (P). Then $\alpha^*=\|x^*\|^2+1$ is an optimal solution of $(\rm PM)$. Algorithm TRTLSG starts from an initial interval covering $\alpha^*$, denoted by $[\alpha_{\min},\alpha_{\max}]$. As in \cite{B06}, $\alpha_{\min}$ is trivially set as $1+\epsilon_1$, where $\epsilon_1>0$ is a tolerance parameter. $\alpha_{\max}$ is chosen in a closed form based on
a tedious derivation of the upper bound of $\|x^*\|$
under Assumption (\ref{as}).
Notice that the computational cost of Algorithm TRTLSG is proportional to $log(\alpha_{\max}-\alpha_{\min})$, the length of the initial interval.
Thus, in the first part of this paper, we try to improve the lower and upper estimations of $\alpha^*$. More precisely, we firstly establish a new closed-form upper bound of $\alpha^*$, which greatly improves the quality of the current estimation at the same computational cost. Secondly, a new lower bound of $\alpha^*$ is derived in place of the trivial setting $\alpha_{\min}=1+\epsilon_1$. With the new setting of $\alpha_{\min}$ and $\alpha_{\max}$, the efficiency of Algorithm TRTLSG is greatly improved for our tested numerical result.

The main contribution of this paper is to propose a novel two-layer dual  approach for underestimating $\mathcal{G}(\alpha)$ over any given interval, without additional computational cost except for  evaluating $\mathcal{G}(\alpha)$ at the two endpoints of the interval.
With this high-quality underestimation, we develop an
efficient branch-and-bound algorithm to solve the one-dimensional parametric reformulation $(\rm PM)$ (\ref{1dim}). Our new algorithm guarantees to find a global $\epsilon$-approximation solution of $(\rm PM)$ in at most $O(1/\epsilon)$ iterations and the computational effort in each iteration is $O(n^3\log(1/\epsilon))$. Under the additional assumption to make $\mathcal{G}(\alpha)$ be differentiable, the number of iterations can be further reduced to $O(1/\sqrt{\epsilon})$. Numerical results demonstrate that, in most cases, our new global optimization algorithm is much faster than the improved version of the heuristic Algorithm TRTLSG.

The remainder of the paper is organized as follows.
In Section 2, we present some preliminaries and the bisection heuristic Algorithm TRTLSG. In Section 3, we establish new lower and upper bounds on the norm of any optimal solution of (P), with which the computation cost of Algorithm TRTLSG greatly decreases.  In Section 4, we propose a novel underestimation and then use it to develop an efficient branch-and-bound algorithm. The worst-case computational complexity is also analyzed. Numerical comparisons among the above three algorithms are reported in Section 5. Concluding remarks are made in Section 6.

Throughout the paper, the notation ``:='' denotes ``define''. $v(\cdot)$ denotes the optimal objective value of the
problem $(\cdot)$. $I$ is the identity matrix. The matrix $A\succ(\succeq)0$ stands for that $A$ is positive
(semi-)definite. The inner product of two matrices $A$ and $B$ are tr($AB^T$). ${\rm Range}(A)=\{Ax: x\in\Bbb R^n\}$ is the range space of $A$.
The one-dimensional intervals $\{x: a< x < b\}$ and $\{x: a \le x \le
b\}$ are denoted by $(a, b)$ and $[a, b]$, respectively. $\lceil(\cdot)\rceil$ is the smallest integer larger than or equal to $(\cdot)$.

\section{The bisection algorithm}
In this section, we present the bisection algorithm, denoted by Algorithm TRTLSG in \cite{B06}. To begin with, we firstly list some preliminary results of (P) and $\mathcal{G}(\alpha)$ defined in (\ref{1dim}).

\begin{thm}[\cite{B06}]\label{thm:condition}
Under Assumption (\ref{as}) and $k<n$, we have
\begin{equation}
 v({\rm P})\leq \lambda_{\min}\left[ \begin{array}{cc}F^TA^TAF& F^TA^Tb\\b^TAF& \|b\|^2\end{array}\right] \label{ub1}
\end{equation}
and the minimum of {\rm (P)} is attained.
\end{thm}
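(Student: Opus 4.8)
The plan is to establish the two assertions—the bound (\ref{ub1}) and attainment—separately, in both cases by passing to the restriction of (P) to the null space of $L$. Since $k<n$, the matrix $F\in\Bbb R^{n\times(n-k)}$ is nontrivial; setting $x=Fy$ with $y\in\Bbb R^{n-k}$ and using $\|Fy\|=\|y\|$ and $LFy=0$, the objective of (P) along this subspace collapses to the plain total least squares quotient $\|AFy-b\|^2/(\|y\|^2+1)$. Minimizing over a subset of $\Bbb R^n$ can only raise the optimal value, so
\[
v({\rm P})\ \le\ \inf_{y\in\Bbb R^{n-k}}\frac{\|AFy-b\|^2}{\|y\|^2+1}.
\]

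Next I would turn the right-hand side into a homogeneous Rayleigh quotient. Writing $AFy-b=[AF,\,-b]\vectwo{y}{1}$ gives
\[
\frac{\|AFy-b\|^2}{\|y\|^2+1}=\frac{\vectwo{y}{1}^T\widetilde M\,\vectwo{y}{1}}{\|y\|^2+1},\qquad \widetilde M:=[AF,\,-b]^T[AF,\,-b],
\]
and $\widetilde M$ shares its spectrum with the matrix in (\ref{ub1}), the two differing only by a sign change of the off-diagonal block (conjugation by ${\rm diag}(I,-1)$). Let $(u^T,t)^T$ be a unit eigenvector of $\widetilde M$ for $\lambda_{\min}(\widetilde M)$. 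If $t\ne0$, then the choice $y=u/t$ makes the quotient equal to $\lambda_{\min}(\widetilde M)$, which proves (\ref{ub1}). The only place Assumption (\ref{as}) enters is to rule out $t=0$: if $(u^T,0)^T$ were such an eigenvector, the block form of $\widetilde M$ would force $F^TA^TAF\,u=\lambda_{\min}(\widetilde M)\,u$ with $u\neq 0$, so $\lambda_{\min}(\widetilde M)\ge\lambda_{\min}(F^TA^TAF)$, contradicting (\ref{as}).

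For attainment I would argue that every minimizing sequence $\{x_j\}$ of (P) is bounded; then continuity of the objective $f$ of (P) together with compactness yields a global minimizer. Suppose not, and pass to a subsequence with $\|x_j\|\to\infty$ and $d_j:=x_j/\|x_j\|\to d$, $\|d\|=1$. Dividing numerator and denominator by $\|x_j\|^2$ gives $\|Ax_j-b\|^2/(\|x_j\|^2+1)\to\|Ad\|^2$, whereas $\rho\|Lx_j\|^2=\rho\|x_j\|^2\|Ld_j\|^2\to+\infty$ whenever $Ld\neq0$. Since $f(x_j)$ stays bounded (it converges to $v({\rm P})<\infty$), we must have $Ld=0$, i.e. $d=Fc$ with $\|c\|=1$, and then, using $f(x_j)\ge\|Ax_j-b\|^2/(\|x_j\|^2+1)$,
\[
v({\rm P})=\lim_{j}f(x_j)\ \ge\ \|AFc\|^2=c^TF^TA^TAF\,c\ \ge\ \lambda_{\min}(F^TA^TAF)\ >\ \lambda_{\min}(\widetilde M)\ \ge\ v({\rm P}),
\]
the last two inequalities being (\ref{as}) and the already-established bound (\ref{ub1}); this contradiction forces boundedness.

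The routine part is the bound (\ref{ub1}): once the reduction to ${\rm Null}(L)$ is made it is essentially a one-line Rayleigh-quotient computation. The main obstacle—and the step where Assumption (\ref{as}) really does its work—is attainment, since the strict gap $\lambda_{\min}(\widetilde M)<\lambda_{\min}(F^TA^TAF)$ is exactly what prevents a minimizing sequence from running off to infinity along the null space of $L$, the only direction in which the penalty term $\rho\|Lx\|^2$ does not blow up.
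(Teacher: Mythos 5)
Your proof is correct. Note that the paper itself gives no proof of this theorem --- it is quoted from Beck and Ben-Tal \cite{B06} as a known result --- so there is no in-paper argument to compare against; your reconstruction follows the same standard route as the cited source: restricting (P) to ${\rm Null}(L)={\rm Range}(F)$ turns the objective into the Rayleigh quotient of $[AF~\ {-b}]^T[AF~\ {-b}]$ (which is cospectral with the matrix in (\ref{ub1}) via conjugation by ${\rm diag}(I,-1)$), giving the bound, and the strict gap in Assumption (\ref{as}) is exactly what makes minimizing sequences bounded, since the only unbounded directions not penalized by $\rho\|Lx\|^2$ lie in ${\rm Range}(F)$ and would force $v({\rm P})\ge\lambda_{\min}(F^TA^TAF)$. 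Two minor remarks: the bound $v({\rm P})\le\lambda_{\min}$ actually holds even without ruling out $t=0$, since $\inf_y\|AFy-b\|^2/(\|y\|^2+1)$ equals $\lambda_{\min}$ by density of the affine slice in the sphere of directions, so Assumption (\ref{as}) is only genuinely needed for attainment; and your use of $\|Fy\|=\|y\|$ tacitly reads ``orthogonal basis'' as $F^TF=I$, which is the convention the paper uses elsewhere (e.g.\ in the proof of Lemma \ref{lem1}).
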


\begin{thm}[\cite{B06}]\label{thm:ub1}
Let $x^*$ be an optimal solution of {\rm(P)}.
If $k=n$, we have
\[
\|x^*\|^2 \leq \frac{\|b\|^2}{\rho \cdot\lambda_{\min}\left(LL^T\right)}.
\]
Otherwise, if $k<n$, under Assumption (\ref{as}), it holds that
\begin{equation}
\|x^*\|^2 \leq  \max \bigg\{ 1, \frac{\|b\|^2+ \left(\lambda_{\max}(A^TA)+\|A^{T}b\|\right)(\delta +2\sqrt{\delta})+l_1(1+\delta)}{l_1-l_2} \bigg\}^2+\delta, \label{ubb2}
\end{equation}
where $\lambda_{\max}(\cdot)$ is the maximal eigenvalue of $(\cdot)$, and
\begin{eqnarray}
l_1&=& \lambda_{\min}\left(  F^TA^TAF \right), \label{L1}\\
l_2&=& \lambda_{\min}\left[ \begin{array}{cc}F^TA^TAF& F^TA^Tb\\b^TAF& \|b\|^2\end{array}\right],\label{L2}\\
\delta&=& \frac{l_2}{\rho\cdot\lambda_{\min}\left(LL^T\right)}.\nonumber
\end{eqnarray}
\end{thm}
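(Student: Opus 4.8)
The plan is to bound $\|x^*\|$ by exploiting the optimality of $x^*$ together with the upper bound on $v(\mathrm P)$ supplied by Theorem \ref{thm:condition}. I would treat the two cases separately.

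For the case $k=n$, the matrix $F$ is absent and $L\in\Bbb R^{n\times n}$ has full rank, so $\|Lx^*\|^2\ge\lambda_{\min}(LL^T)\|x^*\|^2$. Since $x=0$ is feasible for (P) and gives objective value $\|b\|^2$, optimality yields
\[
\rho\,\lambda_{\min}(LL^T)\,\|x^*\|^2\;\le\;\rho\|Lx^*\|^2\;\le\;\frac{\|Ax^*-b\|^2}{\|x^*\|^2+1}+\rho\|Lx^*\|^2\;=\;v(\mathrm P)\;\le\;\|b\|^2,
\]
which is exactly the claimed inequality. This case is routine.

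For the case $k<n$, the hard part is that $L$ is only row-rank $k$, so $\|Lx^*\|$ controls only the component of $x^*$ orthogonal to $\mathrm{Range}(F)$. I would decompose $x^* = Fy + z$ with $y\in\Bbb R^{n-k}$ and $z\perp\mathrm{Range}(F)$ (equivalently $z$ in the row space of $L$). The $z$-part is handled as in the $k=n$ case: from $\rho\|Lx^*\|^2\le v(\mathrm P)\le l_2$ (using Theorem \ref{thm:condition} for the second inequality) and $\|Lx^*\|^2\ge\lambda_{\min}(LL^T)\|z\|^2$ one gets $\|z\|^2\le\delta$. The genuinely delicate part is bounding $\|y\|$, i.e.\ the component in the null space of $L$, because the regularization term gives no direct control there. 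Here I would use the fractional term: feasibility of $x^*$ and $v(\mathrm P)\le l_2$ force
\[
\|Ax^*-b\|^2\;\le\;l_2\,(\|x^*\|^2+1),
\]
and then expand $Ax^* = AFy + Az$, using $\|AFy\|^2\ge l_1\|y\|^2$ (definition \eqref{L1}), the bound $\|z\|^2\le\delta$ already obtained, $\|Az\|\le\sqrt{\lambda_{\max}(A^TA)}\,\|z\|$, and Cauchy–Schwarz on the cross terms and on $\langle AFy,b\rangle$, $\langle Az,b\rangle$. Collecting terms, the inequality becomes a quadratic inequality of the form $(l_1-l_2)\|y\|^2 \le (\text{linear in }\|y\|) + (\text{constant})$, where the coefficients are exactly those appearing inside the max in \eqref{ubb2}; note $l_1-l_2>0$ precisely by Assumption \eqref{as}. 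Solving this scalar quadratic inequality for $\|y\|$ and using $\|y\|\ge 1$ or $\|y\|<1$ to absorb the lower-order behaviour (this is the source of the ``$\max\{1,\cdot\}$'') gives the stated bound on $\|y\|$, and finally $\|x^*\|^2=\|y\|^2+\|z\|^2\le\|y\|^2+\delta$ yields \eqref{ubb2}.

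The main obstacle is the bookkeeping in the $k<n$ case: getting the cross terms to aggregate into exactly the coefficients $\lambda_{\max}(A^TA)+\|A^Tb\|$ multiplying $\delta+2\sqrt\delta$ and $l_1(1+\delta)$ requires care about which terms are grouped with $\|y\|^2$, which with $\|y\|$, and which are constants, and about when to invoke $\|y\|\ge1$ to simplify $\|y\|$ to $\|y\|^2$. Everything else — the use of $v(\mathrm P)\le l_2$, the eigenvalue bounds, and Cauchy–Schwarz — is standard.
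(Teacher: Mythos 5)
This theorem is imported from \cite{B06}; the paper you are reading does not prove it, so there is no in-paper proof to compare against line by line. That said, your route is exactly the one the paper itself attributes to \cite{B06}: in Section 3.2 the authors state that the bound (\ref{ubb2}) ``is basically based on the two inequalities'' (\ref{in1}) and (\ref{in2}), i.e.\ $\|Ax^*-b\|^2\le l_2(\|x^*\|^2+1)$ and $\rho\|Lx^*\|^2\le l_2$, which are precisely the two consequences of Theorem \ref{thm:condition} that you use; moreover your decomposition of $x^*$ into a null-space-of-$L$ part and a row-space-of-$L$ part, with $\|Lz\|^2\ge\lambda_{\min}(LL^T)\|z\|^2$ and $\|AFy\|^2\ge l_1\|y\|^2$, is the same device the paper uses in its own proof of Theorem \ref{thm:newub}. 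Your $k=n$ case and the derivation $\|z\|^2\le\delta$ are correct, $l_1-l_2>0$ is indeed Assumption (\ref{as}), and your explanation of where the $\max\{1,\cdot\}$ comes from (dividing the scalar quadratic inequality by $\|y\|$ when $\|y\|\ge1$) is the standard mechanism. The only reservation is that you assert rather than verify that your grouping of the cross terms reproduces the exact numerator $\|b\|^2+(\lambda_{\max}(A^TA)+\|A^Tb\|)(\delta+2\sqrt{\delta})+l_1(1+\delta)$; the paper itself calls this bookkeeping ``rather tedious,'' and a blind execution could easily land on a differently arranged (equally valid) constant, so that final matching step should be carried out explicitly before claiming identity with (\ref{ubb2}).
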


Define
\begin{equation}
Q_{\alpha}:=\frac{1}{\alpha}A^{T}A+\rho L^{T}L,~~f_{\alpha}:=\frac{1}{\alpha}A^{T}b.\label{Qf}
\end{equation}
We reformulate $\mathcal{G}(\alpha)$ (\ref{1dim}) as
\begin{equation}
\mathcal{G}(\alpha) = \min_{\|x\|^2=\alpha-1} \left\{ x^{T}Q_{\alpha}x-2f_{\alpha}^{T}x+\frac{\|b\|^2}{\alpha} \right\}, \label{1dim2}
\end{equation}
which is an equality version of the trust region subproblem (TRS) \cite{C00,G80,M83}. Though it is a non-convex optimization problem, there is a
necessary and sufficient condition to characterize the globally optimal solution of (TRS) (\ref{1dim2}). It means that (TRS) enjoys the hidden convexity.
\begin{thm}[\cite{F04,G80,M83}]\label{thm:trs}
For any $\alpha>1$,
$x(\alpha)$ is an optimal solution of (\ref{1dim2}) if and only if there exists $\lambda(\alpha)\in \Bbb R$ such that
\begin{eqnarray}
& (Q_{\alpha}-\lambda(\alpha)I)x(\alpha)=f_{\alpha}, \label{KKT1}\\
& \|x(\alpha)\|^2= \alpha-1, \label{KKT2}\\
& Q_{\alpha}-\lambda(\alpha)I \succeq 0. \label{KKT3}
\end{eqnarray}
\end{thm}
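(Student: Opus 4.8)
The plan is to exploit the \emph{hidden convexity} of the equality-constrained (TRS) (\ref{1dim2}) by passing to a Lagrangian formulation in the variable $\lambda$ and invoking the classical trust-region optimality theory. First I would write the Lagrangian of (\ref{1dim2}) with respect to the single equality constraint $\|x\|^2 = \alpha - 1$, namely $\mathcal{L}(x,\lambda) = x^T Q_\alpha x - 2 f_\alpha^T x + \|b\|^2/\alpha - \lambda(\|x\|^2 - (\alpha-1))$. The crucial structural fact is that, although the feasible set is a sphere (nonconvex), the problem admits an exact characterization: a feasible $x(\alpha)$ is globally optimal if and only if there is a multiplier $\lambda(\alpha)$ such that the stationarity condition $\nabla_x \mathcal{L} = 0$ holds, which is exactly (\ref{KKT1}), together with the \emph{second-order / dual-feasibility} condition $Q_\alpha - \lambda(\alpha) I \succeq 0$, which is (\ref{KKT3}); (\ref{KKT2}) is just primal feasibility. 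This is precisely the content of the classical results of Gay, Mor\'e--Sorensen, and the treatment in \cite{F04,G80,M83}, and since $\alpha > 1$ forces $\alpha - 1 > 0$ so the sphere is nondegenerate, those results apply verbatim with $Q_\alpha$, $f_\alpha$ in the roles of the Hessian and linear term.

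For the sufficiency direction I would argue directly: suppose $x(\alpha)$ and $\lambda(\alpha)$ satisfy (\ref{KKT1})--(\ref{KKT3}), and let $y$ be any other feasible point, i.e.\ $\|y\|^2 = \alpha - 1 = \|x(\alpha)\|^2$. Expanding the objective at $y$ and using (\ref{KKT1}) to substitute $f_\alpha = (Q_\alpha - \lambda(\alpha)I)x(\alpha)$, a short computation gives
\[
\big(y^T Q_\alpha y - 2 f_\alpha^T y\big) - \big(x(\alpha)^T Q_\alpha x(\alpha) - 2 f_\alpha^T x(\alpha)\big) = (y - x(\alpha))^T (Q_\alpha - \lambda(\alpha)I)(y - x(\alpha)) + \lambda(\alpha)\big(\|y\|^2 - \|x(\alpha)\|^2\big).
\]
The last term vanishes because both points lie on the sphere, and the first term is nonnegative by (\ref{KKT3}); hence $x(\alpha)$ is globally optimal. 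The constant $\|b\|^2/\alpha$ plays no role in this comparison.

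For the necessity direction I would invoke that the minimum is attained (the feasible sphere is compact and the objective continuous, so this is immediate here, independent of Assumption~(\ref{as})), then apply first-order optimality for the equality-constrained problem to get $\lambda(\alpha)$ with (\ref{KKT1}) and (\ref{KKT2}), and finally derive (\ref{KKT3}) from the standard trust-region argument: if $Q_\alpha - \lambda(\alpha) I$ had a negative eigenvalue with eigenvector $v$, one perturbs $x(\alpha)$ along $\pm v$ staying on the sphere to strictly decrease the objective, contradicting optimality. The only mild subtlety — the ``hard case'' where $f_\alpha$ is orthogonal to the eigenspace of the smallest eigenvalue of $Q_\alpha$ — is handled exactly as in \cite{M83}, and the main obstacle in writing this cleanly is simply to cite the equality-constrained version of the trust-region theorem accurately rather than the more commonly stated inequality version ($\|x\|^2 \le \Delta$); since our constraint is an equality, the sign of $\lambda(\alpha)$ is unrestricted, which is already reflected in (\ref{KKT3}) not being strengthened to $\lambda(\alpha) \le 0$. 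In fact, the cleanest route is to cite Theorem~\ref{thm:trs} directly from \cite{F04,G80,M83} as stated, since it is a verbatim specialization of their results; the proof above is included only for completeness.
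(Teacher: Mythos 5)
The paper does not actually prove Theorem~\ref{thm:trs}: it is stated as a classical result and attributed to \cite{F04,G80,M83}, so there is no in-paper argument to compare against. Your reconstruction is the standard proof of the equality-constrained trust-region optimality conditions and is essentially correct. The sufficiency identity checks out: using $f_\alpha=(Q_\alpha-\lambda(\alpha)I)x(\alpha)$ and symmetry of $Q_\alpha-\lambda(\alpha)I$, the difference of objective values at $y$ and $x(\alpha)$ equals $(y-x(\alpha))^T(Q_\alpha-\lambda(\alpha)I)(y-x(\alpha))+\lambda(\alpha)(\|y\|^2-\|x(\alpha)\|^2)$, and both terms are handled exactly as you say. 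You are also right that existence of the multiplier in the necessity direction is unproblematic, since $\alpha>1$ forces $x(\alpha)\neq 0$ and the constraint gradient $2x(\alpha)$ is nonzero, so LICQ holds; and that for an equality constraint no sign condition on $\lambda(\alpha)$ should appear.

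The one place where your sketch is thinner than it should be is the derivation of (\ref{KKT3}) from global optimality. Perturbing ``along $\pm v$ staying on the sphere'' only works directly when $x(\alpha)^Tv\neq 0$: in that case the line $x(\alpha)+tv$ meets the sphere at exactly one other point $y=x(\alpha)-\bigl(2x(\alpha)^Tv/\|v\|^2\bigr)v$, and the identity above yields $(y-x(\alpha))^T(Q_\alpha-\lambda(\alpha)I)(y-x(\alpha))<0$, the desired contradiction. When $x(\alpha)^Tv=0$ the line is tangent to the sphere and produces no second feasible point, so the argument as written fails for such eigenvectors; the standard repair is to replace $v$ by $v+\varepsilon x(\alpha)$ for small $\varepsilon$, which still gives a negative quadratic form by continuity and is no longer tangent. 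This is a minor, well-known gap rather than a wrong approach, and since you ultimately propose citing \cite{F04,G80,M83} verbatim --- which is precisely what the paper does --- the proposal is acceptable. Note also that the ``hard case'' you mention is a concern for uniqueness and for algorithms, not for the validity of the if-and-only-if characterization itself, so invoking it here is unnecessary.
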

\begin{cor}
For any $\alpha>1$, suppose
\begin{equation}
f_{\alpha} \notin {\rm Null}(Q_{\alpha}-\lambda_{\min}(Q_{\alpha})I)^\bot,\label{uniq}
\end{equation}
then the KKT conditions (\ref{KKT1})-(\ref{KKT3}) has a unique solution ($x(\alpha)$, $\lambda(\alpha)$).
\end{cor}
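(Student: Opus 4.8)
The plan is to reduce the statement to a single fact: under hypothesis (\ref{uniq}) the multiplier $\lambda(\alpha)$ appearing in Theorem \ref{thm:trs} is forced to lie \emph{strictly} below $\lambda_{\min}(Q_\alpha)$. Once that is established, $Q_\alpha-\lambda(\alpha)I$ is nonsingular, so (\ref{KKT1}) makes $x(\alpha)$ a function of $\lambda(\alpha)$, and a short monotonicity argument pins $\lambda(\alpha)$ down uniquely. Existence of a KKT pair is not the issue: for $\alpha>1$ the sphere $\{\,\|x\|^2=\alpha-1\,\}$ is nonempty and compact and the objective in (\ref{1dim2}) is continuous, so the minimum is attained, and Theorem \ref{thm:trs} then yields a pair $(x(\alpha),\lambda(\alpha))$ satisfying (\ref{KKT1})--(\ref{KKT3}). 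Thus the corollary is really a uniqueness statement.

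First I would exclude the case $\lambda(\alpha)=\lambda_{\min}(Q_\alpha)$. Condition (\ref{KKT3}) always gives $\lambda(\alpha)\le\lambda_{\min}(Q_\alpha)$. If equality held, then $Q_\alpha-\lambda(\alpha)I$ would be singular and symmetric, and solvability of (\ref{KKT1}) would require $f_\alpha\in{\rm Range}(Q_\alpha-\lambda_{\min}(Q_\alpha)I)={\rm Null}(Q_\alpha-\lambda_{\min}(Q_\alpha)I)^{\bot}$, contradicting (\ref{uniq}). Hence $\lambda(\alpha)<\lambda_{\min}(Q_\alpha)$, the matrix $Q_\alpha-\lambda(\alpha)I\succ0$ is invertible, and (\ref{KKT1}) forces $x(\alpha)=(Q_\alpha-\lambda(\alpha)I)^{-1}f_\alpha$. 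So it remains only to show $\lambda(\alpha)$ is unique. For this I would introduce the secular function $\phi(\lambda):=\|(Q_\alpha-\lambda I)^{-1}f_\alpha\|^2$ on $(-\infty,\lambda_{\min}(Q_\alpha))$; writing the spectral decomposition $Q_\alpha=\sum_{i=1}^{n}\mu_i v_iv_i^T$ with $\mu_1\le\cdots\le\mu_n$ and $\gamma_i:=v_i^Tf_\alpha$, one gets $\phi(\lambda)=\sum_{i=1}^n \gamma_i^2/(\mu_i-\lambda)^2$, whose derivative $\sum_{i=1}^n 2\gamma_i^2/(\mu_i-\lambda)^3$ is strictly positive on that interval since $f_\alpha\neq0$ (itself implied by (\ref{uniq})). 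Hence $\phi$ is strictly increasing, so the equation $\phi(\lambda)=\alpha-1$ from (\ref{KKT2}) has at most one root, which together with the previous step gives uniqueness of $(x(\alpha),\lambda(\alpha))$. As a byproduct this also re-proves existence, because $\phi(\lambda)\to0$ as $\lambda\to-\infty$ and $\phi(\lambda)\to+\infty$ as $\lambda\uparrow\lambda_{\min}(Q_\alpha)$, the latter precisely because some $\gamma_i\neq0$ with $\mu_i=\mu_1$.

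The argument is mostly routine TRS bookkeeping. The one place that needs care is the excluded boundary case $\lambda(\alpha)=\lambda_{\min}(Q_\alpha)$: there one must pass through the Fredholm solvability identity ${\rm Range}(M)=({\rm Null}(M))^{\bot}$ for the symmetric matrix $M=Q_\alpha-\lambda_{\min}(Q_\alpha)I$ in order to recognize that the obstruction to solving (\ref{KKT1}) is exactly the negation of (\ref{uniq}); everything else (invertibility away from the smallest eigenvalue, strict monotonicity of the secular function) is standard.
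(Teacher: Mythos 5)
Your argument is correct. The paper itself gives no proof of this corollary --- it is stated as an immediate consequence of Theorem~\ref{thm:trs} and standard trust-region-subproblem theory --- and what you have written is exactly the standard argument one would supply: condition (\ref{uniq}) rules out the hard case $\lambda(\alpha)=\lambda_{\min}(Q_\alpha)$ via the Fredholm identity ${\rm Range}(M)={\rm Null}(M)^{\bot}$ for symmetric $M$, and then strict monotonicity of the secular function $\phi(\lambda)=\|(Q_\alpha-\lambda I)^{-1}f_\alpha\|^2$ on $(-\infty,\lambda_{\min}(Q_\alpha))$, together with its limits $0$ and $+\infty$ at the two ends of that interval (the latter using that $f_\alpha$ has a nonzero component in the minimal eigenspace), pins down $\lambda(\alpha)$ and hence $x(\alpha)=(Q_\alpha-\lambda(\alpha)I)^{-1}f_\alpha$ uniquely. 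No gaps.
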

Theorem \ref{thm:trs} supported many algorithms for solving (TRS), see, for example, \cite{C00,F04,M83,PW,R97,S97}. In this paper, for the tested medium-scale problems, we apply the solution approach based on the
complete spectral decomposition \cite{G89}.

\begin{thm}[\cite{B06}]
$\mathcal{G}(\alpha)$ is continuous over $[1,+\infty)$.
\end{thm}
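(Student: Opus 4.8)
The plan is to show continuity of $\mathcal{G}(\alpha)$ by exhibiting it as an infimal value of a jointly continuous function over a continuously varying feasible set, and then invoke a Berge-type maximum theorem. First I would rewrite, for $\alpha > 1$, the feasible set $\{x : \|x\|^2 = \alpha - 1\}$ via the substitution $x = \sqrt{\alpha-1}\, y$ with $\|y\| = 1$, turning $\mathcal{G}(\alpha)$ into
\[
\mathcal{G}(\alpha) = \min_{\|y\|=1} \left\{ (\alpha-1)\, y^T Q_\alpha y - 2\sqrt{\alpha-1}\, f_\alpha^T y + \frac{\|b\|^2}{\alpha} \right\}.
\]
Now the feasible set is the fixed compact unit sphere, independent of $\alpha$, and the objective is a function $\Phi(\alpha, y)$ that is jointly continuous in $(\alpha, y)$ on $(1, +\infty) \times \{\|y\|=1\}$, since $Q_\alpha$ and $f_\alpha$ depend continuously (indeed smoothly) on $\alpha$ for $\alpha > 1$ by their definitions in (\ref{Qf}). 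By the classical theorem on the continuity of the optimal value of a parametric optimization problem with a fixed compact constraint set and continuous objective (a special case of Berge's maximum theorem), $\mathcal{G}(\alpha) = \min_{\|y\|=1} \Phi(\alpha, y)$ is continuous on $(1, +\infty)$.

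Next I would handle the boundary point $\alpha = 1$ separately, where the constraint forces $x = 0$, so $\mathcal{G}(1) = \|b\|^2$. I would establish $\lim_{\alpha \to 1^+} \mathcal{G}(\alpha) = \|b\|^2$ directly from the displayed formula: on the unit sphere, $|(\alpha-1) y^T Q_\alpha y| \le (\alpha-1)\|Q_\alpha\|$ and $|2\sqrt{\alpha-1}\, f_\alpha^T y| \le 2\sqrt{\alpha-1}\,\|f_\alpha\|$, both of which tend to $0$ as $\alpha \to 1^+$ uniformly in $y$ (noting $\|Q_\alpha\|$ and $\|f_\alpha\|$ stay bounded near $\alpha = 1$), while $\|b\|^2/\alpha \to \|b\|^2$. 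Hence $\mathcal{G}(\alpha) \to \|b\|^2 = \mathcal{G}(1)$, giving continuity at the left endpoint and completing the argument on all of $[1, +\infty)$.

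The main obstacle is purely bookkeeping: making sure the parametric-minimum continuity theorem is applied to a formulation in which the constraint set genuinely does not move with $\alpha$ — the naive set $\{\|x\|^2 = \alpha - 1\}$ does move, and worse, it degenerates to a point at $\alpha = 1$, so one cannot apply a clean maximum-theorem argument uniformly on $[1,+\infty)$ without the rescaling trick above. Once the rescaling is in place, joint continuity of $\Phi$ is immediate and the rest is routine. An alternative, if one prefers to avoid citing Berge's theorem, is to prove continuity by hand: fix $\bar\alpha > 1$, let $\bar x$ be an optimal solution at $\bar\alpha$, use $\sqrt{(\alpha-1)/(\bar\alpha-1)}\,\bar x$ as a feasible competitor at nearby $\alpha$ to get $\limsup_{\alpha \to \bar\alpha} \mathcal{G}(\alpha) \le \mathcal{G}(\bar\alpha)$, and use the uniform boundedness of the optimal solutions $x(\alpha)$ on compact $\alpha$-intervals together with a subsequential limit argument to get the reverse inequality $\liminf_{\alpha \to \bar\alpha} \mathcal{G}(\alpha) \ge \mathcal{G}(\bar\alpha)$; I would present the Berge-theorem route as the clean proof and mention this elementary argument as a remark.
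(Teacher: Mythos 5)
Your proof is correct. Note, however, that the paper you are being compared against does not actually prove this statement: it is imported verbatim from Beck and Ben-Tal \cite{B06} as a cited result, so there is no in-paper argument to measure yours against. Taken on its own terms, your argument is sound: the rescaling $x=\sqrt{\alpha-1}\,y$ is exactly the right device, since it replaces the moving (and, at $\alpha=1$, degenerating) sphere $\{\|x\|^2=\alpha-1\}$ by the fixed compact unit sphere, after which continuity of the value function $\min_{\|y\|=1}\Phi(\alpha,y)$ follows from joint continuity of $\Phi$ and the standard parametric-minimum theorem. One small simplification you could make: since $Q_\alpha$ and $f_\alpha$ from (\ref{Qf}) are continuous on all of $[1,+\infty)$ (the only singularity would be at $\alpha=0$), the function $\Phi(\alpha,y)=(\alpha-1)y^TQ_\alpha y-2\sqrt{\alpha-1}\,f_\alpha^Ty+\|b\|^2/\alpha$ is jointly continuous on $[1,+\infty)\times\{\|y\|=1\}$ including the endpoint, and the rescaled formula already returns $\mathcal{G}(1)=\|b\|^2$ there; so the separate limiting argument at $\alpha=1$, while correct, is not needed. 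The elementary two-sided argument you sketch as a remark (feasible competitor $\sqrt{(\alpha-1)/(\bar\alpha-1)}\,\bar x$ for the $\limsup$ bound, compactness and a subsequential limit for the $\liminf$ bound) is also valid and is essentially the hands-on version of the same idea.
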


\begin{thm}[\cite{B06}]\label{grad}
Suppose that assumption (\ref{uniq}) holds for all $\alpha>1$, then $\mathcal{G}(\alpha)$ is differentiable of any order. Moreover, the first derivative is given by
\begin{equation}
\mathcal{G}'(\alpha)=\lambda(\alpha)-\frac{\|Ax(\alpha)-b\|^2}{\alpha^2},
\nonumber
\end{equation}
where ($x(\alpha)$, $\lambda(\alpha)$) is the unique solution of the KKT conditions (\ref{KKT1})-(\ref{KKT3}).
\end{thm}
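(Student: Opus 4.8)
The plan is to show that the unique KKT solution $(x(\alpha),\lambda(\alpha))$ supplied by the Corollary is a real-analytic (hence $C^\infty$) function of $\alpha$ on $(1,+\infty)$, and then to differentiate the objective of (\ref{1dim2}) directly, using (\ref{KKT1})--(\ref{KKT2}) to produce the cancellation that yields the stated formula.

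First I would use assumption (\ref{uniq}) to upgrade (\ref{KKT3}) to the strict inequality $Q_\alpha-\lambda(\alpha)I\succ0$. Indeed, (\ref{KKT3}) already gives $\lambda(\alpha)\le\lambda_{\min}(Q_\alpha)$; if equality held, then (\ref{KKT1}) would exhibit $f_\alpha=(Q_\alpha-\lambda_{\min}(Q_\alpha)I)x(\alpha)$ as an element of the range of the symmetric matrix $Q_\alpha-\lambda_{\min}(Q_\alpha)I$, i.e. of ${\rm Null}(Q_\alpha-\lambda_{\min}(Q_\alpha)I)^\bot$, contradicting (\ref{uniq}). Hence $Q_\alpha-\lambda(\alpha)I\succ0$, so $x(\alpha)=(Q_\alpha-\lambda(\alpha)I)^{-1}f_\alpha$ and, by (\ref{KKT2}), $\lambda(\alpha)$ is a root of the secular function $\phi(\lambda,\alpha):=\|(Q_\alpha-\lambda I)^{-1}f_\alpha\|^2-(\alpha-1)$ lying in $(-\infty,\lambda_{\min}(Q_\alpha))$.

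Next I would apply the implicit function theorem to $\phi$. On the open set where $Q_\alpha-\lambda I$ is positive definite, $\phi$ is real-analytic in $(\lambda,\alpha)$, since $Q_\alpha$ and $f_\alpha$ depend rationally on $\alpha>0$ and matrix inversion is analytic. A short computation gives $\partial\phi/\partial\lambda=2\,x^T(Q_\alpha-\lambda I)^{-1}x$, which at $(\lambda(\alpha),\alpha)$ is strictly positive because $Q_\alpha-\lambda(\alpha)I\succ0$ and $x(\alpha)\neq0$ (as $\|x(\alpha)\|^2=\alpha-1>0$). Thus near any $\alpha_0>1$ there is a unique real-analytic branch $\hat\lambda(\alpha)$ with $\phi(\hat\lambda(\alpha),\alpha)=0$ and $\hat\lambda(\alpha_0)=\lambda(\alpha_0)$; since $Q_\alpha-\hat\lambda(\alpha)I\succ0$ persists by continuity, the pair $((Q_\alpha-\hat\lambda(\alpha)I)^{-1}f_\alpha,\hat\lambda(\alpha))$ satisfies (\ref{KKT1})--(\ref{KKT3}), and uniqueness in the Corollary forces it to coincide with $(x(\alpha),\lambda(\alpha))$. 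Therefore $\lambda(\alpha)$, and then $x(\alpha)=(Q_\alpha-\lambda(\alpha)I)^{-1}f_\alpha$, are real-analytic on $(1,+\infty)$; in particular they are differentiable of every order.

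Finally, differentiating $\mathcal{G}(\alpha)=x(\alpha)^TQ_\alpha x(\alpha)-2f_\alpha^Tx(\alpha)+\|b\|^2/\alpha$ (valid since $x(\alpha)$ solves (\ref{1dim2}) by Theorem \ref{thm:trs}) and denoting by a dot the derivative $d/d\alpha$, the terms carrying $\dot x$ combine into $2(Q_\alpha x-f_\alpha)^T\dot x=2\lambda x^T\dot x$ by (\ref{KKT1}), while differentiating (\ref{KKT2}) gives $2x^T\dot x=1$, so this block equals $\lambda(\alpha)$. The explicit $\alpha$-dependence, using $\dot Q_\alpha=-A^TA/\alpha^2$, $\dot f_\alpha=-A^Tb/\alpha^2$ and $(\|b\|^2/\alpha)'=-\|b\|^2/\alpha^2$, collapses to $-(x^TA^TAx-2b^TAx+\|b\|^2)/\alpha^2=-\|Ax(\alpha)-b\|^2/\alpha^2$, which is exactly the asserted expression (and is of course the envelope identity for the parametrized equality-constrained program, now $C^\infty$ by the regularity established above). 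I expect the only delicate point to be the very first step: ruling out the ``hard case'' $\lambda(\alpha)=\lambda_{\min}(Q_\alpha)$, since without the strict definiteness $Q_\alpha-\lambda(\alpha)I\succ0$ the function $\phi$ is singular at the solution and both the implicit function argument and the uniqueness of $x(\alpha)$ break down. Everything after that step is routine.
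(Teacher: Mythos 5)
Your argument is correct: ruling out the hard case via (\ref{uniq}) (so that $Q_\alpha-\lambda(\alpha)I\succ 0$), applying the implicit function theorem to the secular equation $\|(Q_\alpha-\lambda I)^{-1}f_\alpha\|^2=\alpha-1$, and then differentiating the objective with the KKT cancellation is exactly the standard route, and it is the one followed in \cite{B06}. The present paper does not reprove this theorem --- it imports it from \cite{B06} --- so there is nothing to compare beyond noting that your derivation matches that source's approach and contains no gaps.
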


Based on Theorems \ref{thm:ub1}, \ref{thm:trs} and \ref{grad},  applying the simple bisection method to solve $\mathcal{G}'(\alpha)=0$ yields  Algorithm TRTLSG proposed in \cite{B06}.

\begin{center}
\fbox{\shortstack[l]{
{\bf Algorithm TRTLSG \cite{B06}}\\
1.~Input: $A\in \Bbb R^{m\times n}$, $b\in \Bbb R^m$, $L\in \Bbb R^{k\times n}$, $\rho> 0$,\\ ~~~~~~~~~~~~and $\epsilon_1>0,~\epsilon_2>0$: tolerance parameters.\\
2.~Set $\alpha_{\min}:= 1+\epsilon_1$. Let $\alpha_{\max}$ be the upper bound given in Theorem \ref{thm:ub1}.\\
3.~While $|\alpha_{\max}- \alpha_{\min}| > \epsilon_2$, repeat the following steps (a)-(c):\\
~~~~(a) Set $\alpha:= \frac{\alpha_{\min}+ \alpha_{\max}}{2}$.\\
    ~~~~(b) Solve (TRS) (\ref{1dim2}) or the equivalent KKT conditions (\ref{KKT1})-(\ref{KKT3}) \\
    ~~~~~~~~~and obtain the solution $(x(\alpha),\lambda({\alpha}))$.\\
    ~~~~(c) If $\lambda({\alpha})- \frac{\|Ax(\alpha)-b\|^2}{\alpha^2}>0$, then set $\alpha_{\max}:= \alpha$; else set $\alpha_{\min}:= \alpha$. \\
4.~Output $x^*:=x(\alpha_{\max})$: an approximately optimal solution of (P).
}}
\end{center}

If the function $\mathcal{G}(\alpha)$ is unimodal,
Algorithm TRTLSG converges to the global minimizer of (P).
It is proved to be true when $L=I$ \cite{B06}. In general,
this is not true.
A counterexample (with
$m = n =4$, $k = 3$) is plotted in \cite{B06} to show that $\mathcal{G}(\alpha)$ is not always unimodal. Thus, Algorithm TRTLSG could
return a local non-global minimizer of (P).

\section{Bounds on the norm of any globally optimal solution}
In this section, we establish new lower and upper bounds on
the norm of any globally optimal solution of (P). They help to greatly improve the efficiency of Algorithm TRTLSG.

\subsection{A new lower bound}
To our best knowledge, there is no nontrivial lower bound on the norm of any globally optimal solution of (P) except for the trivial setting $1+\epsilon_1$ in \cite{B06}. In this subsection, in order to derive such a new lower bound, we firstly need a technical lemma.
\begin{lem}\label{lem1}
Under Assumption (\ref{as}), for any $\mu>0$, we have
\begin{equation}
A^TA+\mu L^TL\succ 0. \label{lem0}
\end{equation}
\end{lem}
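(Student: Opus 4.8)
The plan is to argue by contradiction using the characterization of the null space of $L$. Suppose $A^TA + \mu L^TL \not\succ 0$ for some $\mu > 0$; then there exists $x \neq 0$ with $x^T(A^TA + \mu L^TL)x \le 0$, i.e. $\|Ax\|^2 + \mu \|Lx\|^2 \le 0$. Since $\mu > 0$ and both terms are nonnegative, this forces $Ax = 0$ and $Lx = 0$ simultaneously. The condition $Lx = 0$ means $x \in \mathrm{Null}(L) = \mathrm{Range}(F)$, so we may write $x = Fy$ for some $y \neq 0$ (nonzero because $x \neq 0$ and $F$ has full column rank). Then $Ax = 0$ becomes $AFy = 0$, hence $y^T F^TA^TAF y = 0$ with $y \neq 0$, which shows $\lambda_{\min}(F^TA^TAF) = 0$, equivalently $l_1 = 0$ in the notation of \eqref{L1}.

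Next I would show that $l_1 = 0$ contradicts Assumption \eqref{as}. When $k < n$, Assumption \eqref{as} states precisely that
\[
l_2 = \lambda_{\min}\left[\begin{array}{cc} F^TA^TAF & F^TA^Tb \\ b^TAF & \|b\|^2 \end{array}\right] < \lambda_{\min}(F^TA^TAF) = l_1.
\]
Since the $2\times 2$-block matrix above is positive semidefinite (it equals $[AF \ \ b]^T[AF \ \ b]$ restricted appropriately — more precisely it is the Gram matrix of the columns of $[AF,\,-b]$ up to sign, hence PSD), we have $l_2 \ge 0$. Combined with $l_1 = 0$ this gives $0 \le l_2 < l_1 = 0$, a contradiction. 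In the remaining case $k = n$, the matrix $F$ is vacuous and Assumption \eqref{as} holds automatically; here $L$ is square of full row rank, hence invertible, so $Lx = 0$ already forces $x = 0$ directly, contradicting $x \neq 0$. This handles both branches of \eqref{as} and completes the argument.

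The only mildly delicate point — and the one I would write out carefully — is the sign/semidefiniteness bookkeeping: verifying that the bordered matrix in \eqref{as} is positive semidefinite so that $l_2 \ge 0$, and confirming that the case distinction $k = n$ versus $k < n$ in \eqref{as} is exactly the dichotomy we need. Neither of these is hard, but getting $l_2 \ge 0$ is what closes the chain $0 \le l_2 < l_1 = 0$. Everything else is the routine observation that a sum of a PSD matrix and a positive multiple of another PSD matrix fails to be positive definite only on the intersection of their null spaces.
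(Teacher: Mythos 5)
Your proof is correct and takes essentially the same route as the paper's: both rest on the observations that the bordered matrix in Assumption (\ref{as}) is exactly the Gram matrix $[AF~\,b]^T[AF~\,b]$ (no sign adjustment needed), hence PSD with $l_2\ge 0$, so the strict inequality forces $F^TA^TAF\succ 0$, and that $\mathrm{Null}(L)=\mathrm{Range}(F)$ then excludes a common nonzero null vector of $A$ and $L$. The paper phrases this directly rather than by contradiction, but the mathematical content is identical.
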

\begin{proof}
It is sufficient to consider the nontrivial case $k<n$, as the other case $k=n$ implies that $L^TL\succ 0$ and hence (\ref{lem0}) holds true. Then, according to Assumption (\ref{as}), we have
\[
\lambda_{\min}\left(F^TA^TAF\right)>\lambda_{\min}\left[ \begin{array}{cc}F^TA^TAF& F^TA^Tb\\b^TAF& \|b\|^2\end{array}\right]=\lambda_{\min}\left([AF~b]^T [AF~b]\right)\ge 0.
\]
It follows that $F^TA^TAF\succ 0$, that is,
$y^TF^TA^TAFy>0$ for all $y\neq 0$. Since $\{Fy:y\in \Bbb R^{n-k},y\neq 0\}=\{x\neq 0:Lx=0\}$, we have
 $x^TA^TAx>0$ for all $Lx=0$ and $x\neq 0$. This implies that $Null(L)\cap Null(A)= \{0\}$, which means
that for any $x \neq 0$, either $x^TA^TAx>0$ or $x^TL^TLx>0$. Thus, $x^T(A^TA+\mu L^TL)x>0$ for all $x \neq 0$, implying that $A^TA+\mu L^TL \succ 0$.

The proof is complete.
\end{proof}

\begin{thm}\label{thm:lowb}
Suppose $A^Tb\neq 0$.
Let $x^*$ be an optimal solution of {\rm(P)}. Define
\[
\kappa_1= \left\{\begin{array}{ll} \|b\|^2-b^TA(A^{T}A+ \rho L^{T}L)^{-1}A^{T}b, &{\rm if}~k=n,\\
\min\left\{l_2,\|b\|^2-b^TA(A^{T}A+ \rho L^{T}L)^{-1}A^{T}b\right\}, &{\rm if}~k<n,\end{array}\right.
 \]
where $l_2$ is defined in (\ref{L2}). $\kappa_2=\lambda_{\min}(A^{T}A+ \rho L^{T}L)-\kappa_1$. Then, $\kappa_1<\|b\|^2$ and
\begin{equation}
\|x^*\|\geq \left\{\begin{array}{ll}\frac{\|b\|^2-\kappa_1}{2\|A^{T}b\|}, &{\rm if}~ \kappa_2=0,\\
 \frac{\|A^{T}b\|-\sqrt{\|A^{T}b\|^2-\kappa_2 (\|b\|^2-\kappa_1)}}{\kappa_2}, &{\rm otherwise}.\end{array}\right. \label{xlow}
\end{equation}
\end{thm}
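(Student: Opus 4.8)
The plan is to first produce the closed-form upper bound $v({\rm P})\le\kappa_1$ on the optimal value of (P), and then to convert the optimality identity for $x^*$ into a single quadratic inequality in $t:=\|x^*\|$, whose resolution is precisely (\ref{xlow}). As a preliminary step I would verify $\kappa_1<\|b\|^2$: by Lemma \ref{lem1} the matrix $M:=A^TA+\rho L^TL$ is positive definite, so $M^{-1}\succ 0$, and since $A^Tb\neq 0$ this gives $b^TAM^{-1}A^Tb>0$, hence $\|b\|^2-b^TAM^{-1}A^Tb<\|b\|^2$. For $k=n$ this quantity is $\kappa_1$, while for $k<n$ the number $\kappa_1$ is the minimum of it and $l_2$, so $\kappa_1<\|b\|^2$ in either case; in particular $\|b\|^2-\kappa_1>0$.

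Next I would establish $v({\rm P})\le\kappa_1$. Evaluating the objective of (P) at the unconstrained minimizer $\hat x:=M^{-1}A^Tb$ of $\|Ax-b\|^2+\rho\|Lx\|^2$ and using $\|\hat x\|^2+1\ge 1$,
\[
v({\rm P})\le\frac{\|A\hat x-b\|^2}{\|\hat x\|^2+1}+\rho\|L\hat x\|^2\le\|A\hat x-b\|^2+\rho\|L\hat x\|^2=\|b\|^2-b^TAM^{-1}A^Tb .
\]
When $k<n$, Theorem \ref{thm:condition} gives in addition $v({\rm P})\le l_2$; combining the two bounds yields $v({\rm P})\le\kappa_1$.

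Now let $x^*$ be an optimal solution of (P) and put $t=\|x^*\|$, so that $v({\rm P})=\|Ax^*-b\|^2/(t^2+1)+\rho\|Lx^*\|^2\le\kappa_1$. Writing $\|Ax^*-b\|^2=\|Ax^*\|^2-2b^TAx^*+\|b\|^2$, multiplying through by $t^2+1>0$, discarding the nonnegative term $\rho\|Lx^*\|^2\,t^2$, and using $\|Ax^*\|^2+\rho\|Lx^*\|^2=x^{*T}Mx^*\ge\lambda_{\min}(M)\,t^2$ together with $b^TAx^*\le\|A^Tb\|\,t$ (Cauchy--Schwarz), I would reach $\lambda_{\min}(M)\,t^2-2\|A^Tb\|\,t+\|b\|^2\le\kappa_1(t^2+1)$, equivalently
\[
\kappa_2\,t^2-2\|A^Tb\|\,t+(\|b\|^2-\kappa_1)\le 0 ,
\]
where $\kappa_2=\lambda_{\min}(M)-\kappa_1$. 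It then remains to solve this for $t\ge 0$: if $\kappa_2=0$ it is linear and gives $t\ge(\|b\|^2-\kappa_1)/(2\|A^Tb\|)$ (here $A^Tb\neq 0$ is used), while if $\kappa_2\neq 0$ a short analysis of the roots of the quadratic shows, since $\|b\|^2-\kappa_1>0$, that the smallest $t\ge 0$ satisfying the inequality equals $\big(\|A^Tb\|-\sqrt{\|A^Tb\|^2-\kappa_2(\|b\|^2-\kappa_1)}\big)/\kappa_2$; this is (\ref{xlow}).

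The computations are elementary; the only subtle points are the well-definedness of the square root and the sign of $\kappa_2$. If $\kappa_2>0$ the parabola opens upward, so the very fact that it attains a value $\le 0$ at $t=\|x^*\|$ forces its discriminant $\|A^Tb\|^2-\kappa_2(\|b\|^2-\kappa_1)$ to be nonnegative; if $\kappa_2<0$ that quantity is automatically positive, and the quadratic then has exactly one positive root, again the displayed expression. Checking that in both sign regimes the inequality holds only for $t$ not below this threshold is the one place needing care; the genuinely new ingredient is the simple observation that $\|b\|^2-b^TAM^{-1}A^Tb$ (and, when $k<n$, $l_2$) dominates $v({\rm P})$, which is what makes the whole argument work.
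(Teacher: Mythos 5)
Your proposal is correct and follows essentially the same route as the paper: bound $v({\rm P})$ above by $\kappa_1$ using the unconstrained minimizer of $\|Ax-b\|^2+\rho\|Lx\|^2$ (together with $l_2$ when $k<n$), bound $v({\rm P})$ below via $\lambda_{\min}(A^TA+\rho L^TL)$ and Cauchy--Schwarz to obtain the quadratic inequality $\kappa_2 t^2-2\|A^Tb\|t+(\|b\|^2-\kappa_1)\le 0$ in $t=\|x^*\|$, and solve it. Your closing case analysis of the quadratic's roots is in fact more detailed than the paper's, which simply states that solving the inequality yields (\ref{xlow}).
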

\begin{proof}
Since
\[
\dfrac{\|Ax-b\|^2}{\|x\|^2+1}+\rho \|Lx\|^2
\leq J(x):= \|Ax-b\|^2 + \rho \|Lx\|^2,
\]
we have
\[
v({\rm P})\le \min_{x\in \Bbb R^n} J(x).
\]
By Lemma \ref{lem1}, $J(x)$ has a unique minimizer $x^*=(A^{T}A+ \rho L^{T}L)^{-1}A^{T}b$. Since  $A^{T}b \neq 0$, we have $x^*\neq 0$ and thus it holds that
\[
J(x^*)=\|b\|^2-b^TA(A^{T}A+ \rho L^{T}L)^{-1}A^{T}b<J(0)=\|b\|^2.
\]
We obtain that $\kappa_1<\|b\|^2$.

According to Theorem \ref{thm:condition} and the definitions of $\kappa_1$, $l_2$ and $x^*$, we have
\begin{eqnarray}
\kappa_1\ge v({\rm P})&\ge& \frac{\|Ax^*-b\|^2 + \rho \|Lx^*\|^2}{\|x^*\|^2+1}\nonumber\\
&\ge& \frac{\lambda_{\min}(A^{T}A+ \rho L^{T}L)\|x^*\|^2-2b^TAx^*+\|b\|^2}{\|x^*\|^2+1}\nonumber\\
&\ge& \frac{\lambda_{\min}(A^{T}A+ \rho L^{T}L)\|x^*\|^2-2\|A^{T}b\|\|x^*\|+\|b\|^2}{\|x^*\|^2+1}\nonumber,
\end{eqnarray}
where the last inequality follows from Cauchy-Schwartz inequality.
Therefore, we obtain
\begin{equation}
\left(\lambda_{\min}(A^{T}A+ \rho L^{T}L)-\kappa_1\right)\|x^*\|^2 - 2\|A^{T}b\| \|x^*\| + \|b\|^2-\kappa_1 \leq 0. \label{lbd1}
\end{equation}
Solving the quadratic inequality (\ref{lbd1}) with respect to $\|x^*\|$ gives the lower bound on $\|x^*\|$ (\ref{xlow}). The proof is complete.
\end{proof}

Suppose $A^{T}b = 0$, (P) is reduced to
\begin{equation}
 \min_{x\in \Bbb R^n}\frac{x^TA^TAx+\|b\|^2}{\|x\|^2+1}+\rho\|Lx\|^2,
\label{TR2}
\end{equation}
If $b=0$,  since the objective function (\ref{TR2}) is nonnegative, we can see that $x^*=0$ is an optimal solution of (P).
For this case,  the initial setting $\alpha_{\min}= 1+\epsilon_1$ in Algorithm TRTLSG \cite{B06} is overestimated.

Finally, we assume  $A^{T}b = 0$ and  $b\neq 0$. The relation between the initial setting of $\alpha_{\min}$ and the quality of the approximation minimizer of $\mathcal{G}(\alpha)$ over $\{1\}\cup [\alpha_{\min},\alpha_{\max}]$  is established as follows.
\begin{prop}\label{prop1}
Suppose  $A^{T}b = 0$ and  $b\neq 0$, for any $\epsilon\ge0$,
\[
\min\left\{\mathcal{G}(1),~\min_{ \alpha \ge \frac{\|b\|^2}{\|b\|^2- \epsilon}} \mathcal{G}(\alpha)\right\} \le v({\rm P})+\epsilon.
\]
\end{prop}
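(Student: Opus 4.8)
The plan is to exploit the structure created by the hypothesis $A^Tb=0$. First I would record the two elementary facts that drive everything. Since the only $x$ with $\|x\|^2=0$ is $x=0$, evaluating (\ref{1dim2}) at $\alpha=1$ gives $\mathcal{G}(1)=\|b\|^2$. Second, because $A^Tb=0$ forces $b^TAx=0$ for every $x$, we have $\|Ax-b\|^2=x^TA^TAx+\|b\|^2\ge\|b\|^2$; dividing by $\alpha$ and adding the nonnegative term $\rho\|Lx\|^2$ yields $\mathcal{G}(\alpha)\ge\|b\|^2/\alpha$ for every $\alpha\ge1$.

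The degenerate range $\epsilon\ge\|b\|^2$ is dispatched immediately: there $\mathcal{G}(1)=\|b\|^2\le\epsilon\le v({\rm P})+\epsilon$, since $v({\rm P})\ge0$. So assume $0\le\epsilon<\|b\|^2$, so that $\bar\alpha:=\|b\|^2/(\|b\|^2-\epsilon)\ge1$ is well defined. Let $\alpha^*$ be a minimizer of $\mathcal{G}$ over $[1,+\infty)$; its existence follows under Assumption (\ref{as}) from Theorem \ref{thm:condition}, and one could alternatively run the argument with an arbitrarily good approximate minimizer and pass to the limit. I would then split according to the location of $\alpha^*$ relative to $\bar\alpha$.

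If $\alpha^*\ge\bar\alpha$, then $\min_{\alpha\ge\bar\alpha}\mathcal{G}(\alpha)\le\mathcal{G}(\alpha^*)=v({\rm P})\le v({\rm P})+\epsilon$, and we are done. If instead $\alpha^*<\bar\alpha$, I invoke the second elementary fact: $v({\rm P})=\mathcal{G}(\alpha^*)\ge\|b\|^2/\alpha^*>\|b\|^2/\bar\alpha=\|b\|^2-\epsilon$, whence $\mathcal{G}(1)=\|b\|^2<v({\rm P})+\epsilon$. In either case the minimum on the left-hand side is at most $v({\rm P})+\epsilon$, which is the claim.

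There is essentially no hard step here; the only points requiring a word of care are the existence of the minimizer $\alpha^*$ (equivalently, that $v({\rm P})$ is attained), which is exactly where Assumption (\ref{as}) enters, and the separate treatment of the degenerate range $\epsilon\ge\|b\|^2$, where the stated constraint $\alpha\ge\|b\|^2/(\|b\|^2-\epsilon)$ is vacuous.
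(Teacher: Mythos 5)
Your proof is correct and rests on the same key estimate as the paper's: since $A^Tb=0$, one has $\mathcal{G}(\alpha)\ge\|b\|^2/\alpha\ge\|b\|^2-\epsilon$ for all $\alpha\le\|b\|^2/(\|b\|^2-\epsilon)$, while $\mathcal{G}(1)=\|b\|^2$. The only organizational difference is that the paper splits the infimum over $[1,+\infty)$ into two infima and so never needs the minimum to be attained, whereas your case analysis on the location of $\alpha^*$ nominally does (a point you correctly flag and patch via approximate minimizers); your separate treatment of the degenerate range $\epsilon\ge\|b\|^2$ is a small extra care that the paper omits.
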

\begin{proof}
According to the definition (\ref{1dim}), we have
\begin{eqnarray*}
\min_{1\le\alpha \le \frac{\|b\|^2}{\|b\|^2- \epsilon}} \mathcal{G}(\alpha) &\ge &
 \min_x \frac{x^TA^TAx+\|b\|^2}{ \|b\|^2/(\|b\|^2- \epsilon)}+\rho\|Lx\|^2 \\
& \geq & \frac{\|b\|^2}{\|b\|^2/(\|b\|^2- \epsilon)} = \|b\|^2- \epsilon.
\end{eqnarray*}
Since $\mathcal{G}(1)=\|b\|^2$, we have
\begin{eqnarray*}
v({\rm P})&=&\min\left\{\min_{1\le\alpha \le \frac{\|b\|^2}{\|b\|^2- \epsilon}} \mathcal{G}(\alpha),~\min_{ \alpha \ge \frac{\|b\|^2}{\|b\|^2- \epsilon}} \mathcal{G}(\alpha)\right\} \\
&\ge& \min\left\{\mathcal{G}(1)-\epsilon,~\min_{ \alpha \ge \frac{\|b\|^2}{\|b\|^2- \epsilon}} \mathcal{G}(\alpha)\right\}\ge \min\left\{\mathcal{G}(1),~\min_{ \alpha \ge \frac{\|b\|^2}{\|b\|^2- \epsilon}} \mathcal{G}(\alpha)\right\}-\epsilon.
\end{eqnarray*}
\end{proof}

\subsection{New upper bounds}
In this subsection, we propose two improved upper bounds on the norm of any optimal solution of (P), one of which has the same computational cost as  the upper bound given in Theorem \ref{thm:ub1}.

Let $x^*$ be any globally optimal solution of (P).  Consider the nontrivial case $k<n$. Though the derivation of
the upper bound (\ref{ubb2}) given in Theorem \ref{thm:ub1} is rather tedious,
it is basically based on the two inequalities
\begin{eqnarray}
&&\|Ax^*-b\|^2\le l_2(\|x^*\|^2+1),\label{in1}\\
&&\rho \|Lx^*\|^2 \le l_2,\label{in2}
\end{eqnarray}
which follow from (\ref{ub1}) in Theorem \ref{thm:condition}. Thus, a tighter upper bound is given by
\begin{equation}
\max_{(\ref{in1}),(\ref{in2})}~\|x^*\|^2.\label{qcqp}
\end{equation}
It leads to an inhomogeneous quadratic constrained quadratic program and still hard to solve. We further relax (\ref{qcqp}) to its Lagrangian dual problem, which can be rewritten as the following semidefinite program (SDP):
\begin{eqnarray*}
({\rm SDP})~~&\min & ~  t \\
               &{\rm s.t.} &   \mu_1 B_2+\mu_2 B_3-B_1\succeq 0,\\
               &  &\mu_1 \geq 0, ~\mu_2 \geq 0,
\end{eqnarray*}
where
\[
B_1=\left(\begin{matrix}I& 0\\ 0& -t \end{matrix}\right),~~ B_2=\left(\begin{matrix} A^{T}A-l_2 I& -A^{T}b \\ -b^{T}A & b^{T}b- l_2 \end{matrix}\right),~~
B_3=\left(\begin{matrix} \rho L^{T}L & 0 \\ 0 & -l_2 \end{matrix}\right).
\]
$v$(SDP) gives a new upper bound of $\|x^*\|^2$. If strong duality holds for (\ref{qcqp}), then the new bound $v$(SDP) is definitely not weaker than (\ref{ubb2}).  But the computation of (SDP) is much more time-consuming than that of (\ref{ubb2}).

In the following, we propose a new upper bound of $\|x^*\|^2$ with the same computational effort as (\ref{ubb2}). The basic idea is directly following the original inequality (\ref{ub1}) rather than (\ref{in1})-(\ref{in2}).

\begin{thm}\label{thm:newub}
Let $\beta=2\lambda_{\max}(A^{T}A)$, $\gamma=2 \|A^{T}b\|$, $\zeta= \rho \lambda_{\min}(L L^{T})$. We have
\begin{eqnarray}
\|x^*\|^2 &\leq& -\frac{1}{2} +\frac{l_2}{2\zeta} +\frac{\sqrt { (\zeta-l_2)^2+\beta^2+4\zeta l_2+\frac{\gamma^2}{l_1-l_2}\zeta } }{2\zeta} \nonumber\\
&&+ \left( \frac{\gamma+\sqrt{\gamma^2 +(l_1-l_2)(4l_2+\frac{\beta^2}{\zeta} +\frac{(\zeta-l_2)^2}{\zeta}) } }{2(l_1-l_2)} \right)^2. \label{newub2}
\end{eqnarray}
\end{thm}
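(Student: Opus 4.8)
The plan is to bound $\|x^*\|^2 = \alpha^* - 1$ (where $\alpha^* = \|x^*\|^2+1$) by exploiting the optimality conditions from Theorem~\ref{thm:trs} applied at $\alpha = \alpha^*$, together with the inequality $v(\mathrm{P}) \le l_2$ from Theorem~\ref{thm:condition}. Recall that at the optimum, $x^* = x(\alpha^*)$ and there is a multiplier $\lambda^* = \lambda(\alpha^*)$ satisfying $(Q_{\alpha^*} - \lambda^* I)x^* = f_{\alpha^*}$, $\|x^*\|^2 = \alpha^*-1$, and $Q_{\alpha^*} - \lambda^* I \succeq 0$. Unwinding the definitions $Q_\alpha = \tfrac1\alpha A^TA + \rho L^TL$ and $f_\alpha = \tfrac1\alpha A^Tb$ and multiplying through by $\alpha^*$, this reads $(A^TA + \rho\alpha^* L^TL - \lambda^*\alpha^* I)x^* = A^Tb$. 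I would introduce the shorthand $\theta := \lambda^*\alpha^*$ so the KKT system becomes $(A^TA + \rho\alpha^* L^TL - \theta I)x^* = A^Tb$ with $A^TA + \rho\alpha^* L^TL \succeq \theta I$, and then extract two scalar estimates: one for $\|x^*\|$ in terms of $\theta$ (and the data norms $\beta/2, \gamma/2, \zeta$), and one bounding $\theta$ itself.

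The first estimate: take norms in the KKT equation. Writing $M := A^TA + \rho\alpha^* L^TL$, we have $(M - \theta I)x^* = A^Tb$, so $\|(M-\theta I)x^*\| = \|A^Tb\| = \gamma/2$. Since $M - \theta I \succeq 0$, the smallest eigenvalue of $M-\theta I$ contributes, but more usefully $\|Mx^* - \theta x^*\| \ge \theta\|x^*\| - \|Mx^*\| \ge \theta\|x^*\| - \lambda_{\max}(M)\|x^*\|$; this is the wrong direction if $\theta$ is small, so instead I expect the useful bound to come from $(M-\theta I)x^* = A^Tb$ rewritten as $\theta x^* = Mx^* - A^Tb$, giving $\theta\|x^*\| \le \|Mx^*\| + \|A^Tb\| \le \lambda_{\max}(M)\|x^*\| + \gamma/2$. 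Bounding $\lambda_{\max}(M) \le \lambda_{\max}(A^TA) + \rho\alpha^*\lambda_{\max}(L^TL)$ threatens to introduce $\alpha^*$ multiplicatively, which would be circular; the trick must be to instead use the inequality $\rho\|Lx^*\|^2 \le l_2$ (from \eqref{in2}) and $\|Ax^*-b\|^2 \le l_2(\|x^*\|^2+1)$ (from \eqref{in1}) to control the relevant quadratic forms directly, and to bound $\theta$ from above via the eigenvalue condition restricted to an appropriate subspace. Concretely, projecting the KKT equation onto $\mathrm{Range}(F)$ (the null space of $L$) kills the $L^TL$ term, and the psd condition $Q_{\alpha^*} - \lambda^* I \succeq 0$ restricted there forces $\lambda^* \le \tfrac1{\alpha^*}\lambda_{\min}(F^TA^TAF) = l_1/\alpha^*$, hence $\theta = \lambda^*\alpha^* \le l_1$; combined with the lower-bound side of the same subspace analysis (using $l_2$), one pins $\theta$ into an interval whose endpoints involve only $l_1, l_2, \gamma$, yielding the messy square-root term $\bigl(\tfrac{\gamma + \sqrt{\gamma^2 + (l_1-l_2)(4l_2 + \beta^2/\zeta + (\zeta-l_2)^2/\zeta)}}{2(l_1-l_2)}\bigr)^2$ that appears as the last summand in \eqref{newub2}.

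Having bounded $\theta$, I would return to the full-space KKT equation and isolate $\|x^*\|^2$. Taking the inner product of $(M - \theta I)x^* = A^Tb$ with $x^*$ gives $x^{*T}Mx^* - \theta\|x^*\|^2 = b^TAx^*$, i.e. $x^{*T}A^TAx^* + \rho\alpha^*\|Lx^*\|^2 - \theta\|x^*\|^2 = b^TAx^*$. Using $\rho\|Lx^*\|^2 \le l_2$, $\alpha^* = \|x^*\|^2+1$, $|b^TAx^*| \le \tfrac\gamma2\|x^*\|$, and $x^{*T}A^TAx^* \ge 0$, we get $l_2(\|x^*\|^2+1) \ge \theta\|x^*\|^2 + b^TAx^* \ge \theta\|x^*\|^2 - \tfrac\gamma2\|x^*\|$ — wait, that needs $\theta \ge 0$; when $\theta$ could be negative one instead upper-bounds $x^{*T}A^TAx^* \le \tfrac\beta2\|x^*\|^2$ and also uses $\|Ax^*-b\|^2 \le l_2(\|x^*\|^2+1)$ to control the cross term, producing a quadratic inequality in $s := \|x^*\|$ of the form $\zeta s^2 - (\text{linear in }s) - (\text{const}) \le 0$ whose positive root gives the first two summands $-\tfrac12 + \tfrac{l_2}{2\zeta} + \tfrac{\sqrt{(\zeta-l_2)^2 + \beta^2 + 4\zeta l_2 + \tfrac{\gamma^2}{l_1-l_2}\zeta}}{2\zeta}$ of \eqref{newub2}. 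Adding the two contributions — the $\theta$-bound term and the direct $s^2$-bound term — and discarding cross terms by a crude $(a+b)^2 \le$-type or triangle split recovers the stated bound. The main obstacle I anticipate is the bookkeeping to avoid circularity: every place $\alpha^*$ or $\lambda^*$ enters the KKT relations it is multiplied into the data, and the delicate point is to peel off those occurrences using only \eqref{ub1}–\eqref{in2} and the subspace eigenvalue inequality $\theta \le l_1$, never reintroducing an unbounded $\alpha^*$; getting the algebra to collapse to exactly the closed form \eqref{newub2} (rather than a slightly weaker but equivalent expression) will require careful grouping of the square-root arguments.
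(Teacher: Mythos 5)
There is a genuine gap: your plan never actually produces the two summands of \eqref{newub2}, and the mechanism you propose for combining them is not coherent. The additive structure of the bound is not ``a multiplier-interval term plus a quadratic-root term''; it is $\|x^*\|^2 = t_1 + t_2$ with $t_1=\|L^Tw\|^2$ and $t_2=\|v\|^2$ coming from the orthogonal decomposition $x^*=L^Tw+Fv$ along ${\rm Range}(L^T)$ and ${\rm Null}(L)={\rm Range}(F)$. The first two summands of \eqref{newub2} are an upper bound on $t_1$ and the squared expression is an upper bound on $t_2$; both are extracted from a \emph{single} joint inequality in $(t_1,t_2)$ obtained by substituting the decomposition into the value bound $v({\rm P})\le l_2$ of Theorem~\ref{thm:condition}, multiplied out as $\|AL^Tw+AFv-b\|^2+\rho\|LL^Tw\|^2(t_1+t_2+1)\le l_2(t_1+t_2+1)$. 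The whole point of the new bound (stated explicitly in the paper) is to keep the coupling term $\zeta t_1(t_1+t_2+1)$ rather than splitting \eqref{ub1} into the two separate inequalities \eqref{in1}--\eqref{in2}; your plan reverts to \eqref{in1}--\eqref{in2} plus the KKT system, which is essentially the route that produced the weaker bound \eqref{ubb2}. A bound on the Lagrange multiplier $\theta=\lambda^*\alpha^*$ is not an additive contribution to $\|x^*\|^2$, so ``adding the $\theta$-bound term and the $s^2$-bound term'' cannot recover \eqref{newub2}; indeed your own claim that the $\theta$-interval involves ``only $l_1,l_2,\gamma$'' contradicts the presence of $\beta$ and $\zeta$ inside that square root. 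Your text also leaves the sign issues ($\theta$ possibly negative, wrong-direction norm estimates) unresolved.

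The paper's actual argument uses no KKT conditions at all. After the substitution above, it lower-bounds $\|AL^Tw+AFv-b\|^2\ge l_1t_2-\sqrt{t_2}(\beta\sqrt{t_1}+\gamma)$ via $\lambda_{\min}(F^TA^TAF)=l_1$ and Cauchy--Schwarz, and $\rho\|LL^Tw\|^2\ge\zeta t_1$, arriving at
\begin{equation}
\left[\zeta t_1^2+(\zeta-l_2)t_1\right]+\left[\zeta t_1t_2-\beta\sqrt{t_1t_2}\right]+\left[(l_1-l_2)t_2-\gamma\sqrt{t_2}\right]-l_2\le 0.\nonumber
\end{equation}
Each bracket is bounded below by $-b^2/(4a)$ for an expression of the form $as-b\sqrt{s}$; dropping two brackets at a time yields a one-variable quadratic inequality in $t_1$ (respectively in $\sqrt{t_2}$), whose roots give exactly the two summands of \eqref{newub2}. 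If you want to salvage your KKT-based route, you would at best obtain a single-square-root bound on $\|x^*\|$ of a different (and not obviously comparable) form; to prove the theorem as stated you need the $L^Tw+Fv$ decomposition and the coupled inequality.
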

\begin{proof}
$x^*$ has  the following decomposition
\begin{equation}
x^*=L^Tw+Fv, \label{dec}
\end{equation}
where $w\in\Bbb R^k$ and $v\in \Bbb R^{n-k}$. Substituting (\ref{dec}) into (\ref{ub1}) yields
\begin{equation}
\|AL^Tw+AFv-b\|^2+\rho \|LL^Tw\|^2(t_1+t_2+1)\le l_2(t_1+t_2+1), \label{ineq}
\end{equation}
where $t_1=\|L^Tw\|^2$ and $t_2=\|v\|^2$.
Since
\begin{eqnarray}
\|AL^Tw+AFv-b\|^2&=& \|AFv\|^2+2v^TF^TA^T(AL^Tw-b)+\|AL^Tw-b\|^2\nonumber\\
&\ge &l_1\|v\|^2+2v^TF^TA^T(AL^Tw-b)\nonumber\\
&\ge &l_1\|v\|^2-2\|v\|\cdot\|A^TAL^Tw-A^Tb\|\nonumber\\
&\ge &l_1\|v\|^2-2\|v\|\left(\lambda_{\max}(A^TA)\|L^Tw\|+\|A^Tb\|\right)\nonumber\\
&=&l_1t_2-\sqrt{t_2}\left(\beta\sqrt{t_1}+\gamma\right),\nonumber
\end{eqnarray}
where the second inequality follows from Cauchy-Schwartz inequality, and
\begin{eqnarray}
\rho\|LL^Tw\|^2&=&\rho w^T(LL^T)^{\frac{1}{2}}LL^T(LL^T)^{\frac{1}{2}}w \nonumber\\
&\ge&\rho \lambda_{\min}(LL^T) w^T(LL^T)^{\frac{1}{2}}(LL^T)^{\frac{1}{2}}w\nonumber\\
&=&\zeta w^TLL^Tw\nonumber\\
&=&\zeta t_1,\nonumber
\end{eqnarray}
it follows from the inequality (\ref{ineq}) that
\[
l_1t_2-\sqrt{t_2}\left(\beta\sqrt{t_1}+\gamma\right)+\zeta t_1(t_1+t_2+1)\le l_2(t_1+t_2+1).
\]
Or equivalently, we have
\begin{equation}
\left[\zeta t_1^2+(\zeta-l_2) t_1\right]+\left[\zeta t_1t_2-\beta\sqrt{t_1t_2}\right]
+\left[(l_1-l_2)t_2-\gamma\sqrt{t_2}\right]-l_2\le 0.\label{ineq2}
\end{equation}
Notice that
\begin{eqnarray}
&&\zeta t_1^2+(\zeta-l_2) t_1\ge -\frac{(\zeta-l_2)^2}{4\zeta},\label{i:1}\\
&&\zeta t_1t_2-\beta\sqrt{t_1t_2}\ge -\frac{\beta^2}{4\zeta},\label{i:2}\\
&&(l_1-l_2)t_2-\gamma\sqrt{t_2}\ge -\frac{\gamma^2}{4(l_1-l_2)}.\label{i:3}
\end{eqnarray}
Substituting (\ref{i:2})-(\ref{i:3}) into (\ref{ineq2}), we obtain
\[
t_1\le -\frac{1}{2} +\frac{l_2}{2\zeta} +\frac{\sqrt { (\zeta-l_2)^2+\beta^2+4\zeta l_2+\frac{\gamma^2}{l_1-l_2}\zeta } }{2\zeta}.
\]
Similarly, substituting (\ref{i:1})-(\ref{i:2}) into (\ref{ineq2}), we have
\[
\sqrt{t_2}\le \frac{\gamma+\sqrt{\gamma^2 +(l_1-l_2)(4l_2+\frac{\beta^2}{\zeta} +\frac{(\zeta-l_2)^2}{\zeta}) } }{2(l_1-l_2)}.
\]
The proof is complete as $\|x^*\|^2=t_1+t_2$.
\end{proof}

In order to compare the existing upper bound (\ref{ubb2}) with the new bounds $v$(SDP) and (\ref{newub2}), we do numerical experiments using the noise-free data of the first example presented in Section 6. The dimension $n$ varies from $20$ to $3000$ and the regularization parameter $\rho$ is simply fixed at $0.5$. The computational environment is presented in Section 6. We report the numerical results  in Table \ref{tab:1}. It can be seen that $v$(SDP) gives the tightest upper bound with the highest computation cost. For each test instance, the new upper bound  (\ref{newub2}) is much tighter than the existing upper bound (\ref{ubb2}) in the same computational time. We can see that, for the instance of dimension $1000$, Algorithm TRTLSG will save  $\log_2\left(\frac{1.97\times10^{12}}{4.79\times10^6}\right)\approx 15$ iterations if the new upper bound (\ref{newub2}) is used to replace (\ref{ubb2}).
From Columns 2-3 of Table \ref{tab:1}, it is observed that the new lower bound (\ref{xlow}), solved at a low computational cost, is much tighter than the trivial bound $1+\epsilon_1=1.1$.
For the instance of dimension $1000$,  replacing the trivial bound $1+\epsilon_1$ with
the new lower bound (\ref{xlow}) will help
Algorithm TRTLSG to save  $\log_2\left(\frac{1.64\times10^{2}}{1.1}\right)\approx 7$ iterations.

\begin{table}[h]\centering
\caption{
Computational time (in seconds) and the quality of the new lower bound (\ref{xlow}), the upper bound (\ref{ubb2}) given in \cite{B06},
the new upper bounds (\ref{newub2}) and $v$(SDP), where $aeb=a\times 10^b$.} \label{tab:1}
\begin{tabular}[h]{rrrrrrrrr}
\hline
&\multicolumn{2}{c}{new b.d. (\ref{xlow})}&\multicolumn{2}{c}{b.d. (\ref{ubb2}) in \cite{B06}}&\multicolumn{2}{c}{new b.d. (\ref{newub2})}&\multicolumn{2}{c}{$v$(SDP)}\\
\cline{2-9}
	n&	time	&$\alpha_{\min}$&time	&$\alpha_{\max}$&time	&$\alpha_{\max}$&	time	 &$\alpha_{\max}$	\\
\hline
20	&	0.00 	&	4.28 	&	0.00 	& $3.02e4$  &	0.00 	& $2.28e3$  &	1.39 	&	 $4.76e1$	\\
50  & 0.00      & 9.18      & 0.00      & $1.35e6$  & 0.00      & $1.32e4$  & 0.47      & $1.60e2$ \\
100 & 0.00      & $1.73e1$  & 0.00      & $3.08e7$  & 0.00      & $5.08e4$  & 0.63      & $4.27e2$ \\
200 & 0.00      & $3.37e1$  & 0.00      & $7.98e8$  & 0.02      & $1.98e5$  & 1.30      & $1.20e3$ \\
500 & 0.03      & $8.27e1$  & 0.02      & $6.62e10$ & 0.03      & $1.21e6$  & 6.97      & $5.17e3$ \\
1000 & 0.09     & $1.64e2$  & 0.09      & $1.97e12$ & 0.09      & $4.79e6$  & 25.75     & $1.66e4$ \\
1200 & 0.14     & $1.97e2$  & 0.13      & $4.83e12$ & 0.14      & $6.88e6$  & 59.03     & $2.28e4$ \\
1500 & 0.22     & $2.46e2$  & 0.20      & $1.45e13$ & 0.20      & $1.07e7$  & 102.81    & $3.37e4$ \\
1800 & 0.31     & $2.95e2$  & 0.33      & $3.56e13$ & 0.33       & $1.54e7$  &149.72    & $4.66e4$ \\
2000 & 0.39     & $3.28e2$  & 0.41      & $6.00e13$ & 0.41       & $1.90e7$  &199.24    & $5.63e4$ \\
2500 & 0.90     & $4.10e2$  & 0.98      & $1.81e14$ & 1.02       & $2.96e7$  &307.04    & $8.42e4$ \\
3000 & 2.42     & $4.92e2$  & 2.44      & $4.46e14$ & 2.36       & $4.26e7$  &470.16    & $1.17e5$ \\
\hline\hline
\end{tabular}
\end{table}

\section{Branch-and-bound algorithm based on a new bound}
In this section we firstly present a new two-layer dual approach for underestimating $\mathcal{G}(\alpha)$ (\ref{1dim}) and then use it to develop an efficient branch-and-bound algorithm(BTD Algorithm,). The worst-case computational complexity is also analyzed.

\subsection{A new underestimation approach}
The efficiency to solve (P) via (\ref{1dim}) relies on an easy-to-compute and high-quality lower bound of $\mathcal{G}(\alpha)$ (\ref{1dim}) over any given interval $[\alpha_i,\alpha_{i+1}]$. The difficulty is that there seems to be no closed-form expression of $\mathcal{G}(\alpha)$. In this subsection, we present a new approach for underestimating $\mathcal{G}(\alpha)$.

For the sake of simplicity, let $p(x):=\|Ax-b\|^2$, $g(x):=\|x\|^2+1$, and $h(x):=\rho \|Lx\|^2$. Our goal is to find a lower bound of the problem:
\begin{equation}
\min_{\alpha\in[\alpha_i,\alpha_{i+1}]} \left\{\mathcal{G}(\alpha)=\min_{g(x)=\alpha} \frac{p(x)}{\alpha}+h(x)\right\}
\label{newb0}
\end{equation}
using only the solutions of evaluating $\mathcal{G}(\alpha)$ at the two endpoints $\alpha_i$ and $\alpha_{i+1}$, i.e., $(x(\alpha_i),\lambda(\alpha_i))$ and $(x(\alpha_{i+1}),\lambda(\alpha_{i+1}))$, which are obtained by solving (\ref{KKT1})-(\ref{KKT3}) with the setting $\alpha=\alpha_i$ and $\alpha=\alpha_{i+1}$, respectively.

For any $\alpha\in[\alpha_i,\alpha_{i+1}]$, the  problem of evaluating $\mathcal{G}(\alpha)$ is an equality version of (TRS) and enjoys the strong Lagrangian duality \cite{XSR16}. Then, it follows that
\begin{eqnarray}
\mathcal{G}(\alpha)&=& \max_{\lambda\in\Bbb R}\min_{x\in\Bbb R^n}  \frac{p(x)}{\alpha}+h(x)-\lambda(g(x)-\alpha)\label{G1}\\
&=&\max_{\lambda\in\Bbb R}   \frac{p(x(\lambda,\alpha))}{\alpha}+h(x(\lambda,\alpha))-\lambda g(x(\lambda,\alpha))+\alpha\lambda,\label{G2}
 \end{eqnarray}
where $x(\lambda,\alpha)$ is an optimal solution of the inner minimization of (\ref{G1}).

Let $(x(\alpha),\lambda(\alpha))$ be the solution of the KKT system (\ref{KKT1})-(\ref{KKT3}), i.e., $x(\alpha)$ is an optimal solution to the minimization problem of evaluating $\mathcal{G}(\alpha)$ and $\lambda(\alpha)$ is the Lagrangian multiplier corresponding to the sphere constraint $g(x)-\alpha=0$. Then, we have
\begin{eqnarray}
\mathcal{G}(\alpha)
&=& \frac{p(x(\alpha))}{\alpha}+h(x(\alpha))\label{G11}\\
&=& \frac{p(x(\alpha))}{\alpha}+h(x(\alpha))-\lambda(\alpha)
(g(x(\alpha))-\alpha)\nonumber\\
&=& \min_{x\in\Bbb R^n}   \frac{p(x)}{\alpha}+h(x)-\lambda(\alpha)(g(x)-\alpha),\label{G3}
\end{eqnarray}
where (\ref{G11}) is due to the constraint that $g(x)-\alpha=0$, (\ref{G3}) follows from the fact that $\lambda(\alpha)$ is an optimal solution to the outer optimization problem of (\ref{G1}).

Setting $\alpha=\alpha_i$ and $\alpha=\alpha_{i+1}$ in (33), respectively, we have
\begin{eqnarray}
&&\frac{p(x(\lambda,\alpha))}{\alpha_i}+h(x(\lambda,\alpha)) - \lambda(\alpha_i) g(x(\lambda,\alpha))+\alpha_i \lambda(\alpha_i) \geq
 \mathcal{G}(\alpha_i), \nonumber
\end{eqnarray}
and
\begin{eqnarray}
&&   \frac{p(x(\lambda,\alpha))}{\alpha_{i+1}}+h(x(\lambda,\alpha)) - \lambda(\alpha_{i+1}) g(x(\lambda,\alpha))+\alpha_{i+1} \lambda(\alpha_{i+1})
\geq
\mathcal{G}(\alpha_{i+1}), \nonumber
\end{eqnarray}
which give hints of estimating the unknowns in (\ref{G2}), $p(x(\lambda,\alpha))$,  $h(x(\lambda,\alpha))$ and $g(x(\lambda,\alpha))$.
It leads to the following underestimation of $\mathcal{G}(\alpha)$ over $\alpha\in[\alpha_i,\alpha_{i+1}]$:
\begin{eqnarray}
\underline{\mathcal{G}}(\alpha):=
\max_{\lambda\in\Bbb R} &\min\limits_{y_p,y_h,y_g}&   \frac{y_p}{\alpha}+y_h-\lambda y_g+\alpha\lambda\label{lp:1}\\
&{\rm s.t.}&\frac{y_p}{\alpha_i}+y_h - \lambda(\alpha_i) y_g+\alpha_i \lambda(\alpha_i) \geq \mathcal{G}(\alpha_i),  \label{lp:2}\\
&&   \frac{y_p}{\alpha_{i+1}}+y_h- \lambda(\alpha_{i+1}) y_g+\alpha_{i+1} \lambda(\alpha_{i+1}) \geq \mathcal{G}(\alpha_{i+1}).\label{lp:3}
\end{eqnarray}
The inner optimization problem of (\ref{lp:1})-(\ref{lp:3})  in terms of $(y_p,y_h,y_g)$ is a linear program and hence it is equivalent to its dual maximization problem. Thus, we can rewrite the underestimation (\ref{lp:1})-(\ref{lp:3}) as a double maximization problem:
\begin{eqnarray}
\max_{\lambda\in\Bbb R}&\max\limits_{\mu_1,\mu_2}&  \mu_1(\mathcal{G}(\alpha_i)-\alpha_i \lambda(\alpha_i))+\mu_2(\mathcal{G}(\alpha_{i+1})-\alpha_{i+1} \lambda(\alpha_{i+1}))+\alpha\lambda \label{u0}\\
&{\rm s.t.}& \frac{1}{\alpha_i}\mu_1+\frac{1}{\alpha_{i+1}}\mu_2=\frac{1}{\alpha},  \label{u1}\\
& & \mu_1+\mu_2=1, \label{u2}\\
& &  \lambda(\alpha_i)\mu_1+ \lambda(\alpha_{i+1})\mu_2 = \lambda, \label{u3}\\
& &  \mu_1,\mu_2 \geq 0, \label{u4}
\end{eqnarray}
which can be recast as a standard optimization problem of maximizing (\ref{u0}) subject to (\ref{u1})-(\ref{u4}) with respect to $\lambda$, $\mu_1$ and $\mu_2$. It follows from $\alpha_i<\alpha_{i+1}$ that $\mu_1$ and $\mu_2$ can be uniquely solved by
the equalities (\ref{u1})-(\ref{u2}), that is,
\begin{equation}
\mu_1= \frac{\alpha_i(\alpha_{i+1}-\alpha)}{\alpha(\alpha_{i+1}-\alpha_i)},~  \mu_2= \frac{\alpha_{i+1}(\alpha-\alpha_i)}{\alpha(\alpha_{i+1}-\alpha_i)}. \label{u1u2}
\end{equation}
Moreover, for the solutions $\mu_1$ and $\mu_2$ of (\ref{u1u2}), the constraint (\ref{u4}) holds  as
$1\le\alpha_i<\alpha_{i+1}$ and $\alpha\in[\alpha_i,\alpha_{i+1}]$. Substituting the solutions $\mu_1$ and $\mu_2$ (\ref{u1u2}) into the equality constraint (\ref{u3}), we obtain
\begin{equation}
\lambda= \frac{\alpha_i \alpha_{i+1}}{\alpha_{i+1}-\alpha_i}(\lambda(\alpha_i) - \lambda(\alpha_{i+1})) \frac{1}{\alpha}+ \frac{1}{\alpha_{i+1}-\alpha_i} (\lambda(\alpha_{i+1}) \alpha_{i+1} - \lambda(\alpha_i) \alpha_i). \label{lambd}
\end{equation}
Therefore, the optimization problem (\ref{u0})-(\ref{u4}) has been explicitly solved.
Plugging (\ref{u1u2}) and (\ref{lambd}) in (\ref{u0}) yields a closed-form expression of the underestimation function $\underline{\mathcal{G}}(\alpha)$:
\begin{equation}
 \underline{\mathcal{G}}(\alpha)=
c_1 \alpha+\frac{c_2}{\alpha}+c_3,     \label{newub}
\end{equation}
where the constant coefficients are defined as
\begin{eqnarray}
c_1&=&\frac{\alpha_{i+1}\lambda(\alpha_{i+1})  - \alpha_i\lambda(\alpha_i)}{\alpha_{i+1}-\alpha_i},\label{c1}\\
c_2&=&\alpha_i \alpha_{i+1}\left(c_1-
\frac{\mathcal{G}(\alpha_{i+1})-\mathcal{G}(\alpha_i)}{\alpha_{i+1}-\alpha_i}
\right),\label{c2}\\
c_3&=&\frac{ \alpha_{i+1}\mathcal{G}(\alpha_{i+1})-  \alpha_i\mathcal{G}(\alpha_{i})}{\alpha_{i+1}-\alpha_i}-
c_1(\alpha_{i+1}+\alpha_{i})
.\label{c3}
\end{eqnarray}
The underestimation $\underline{\mathcal{G}}(\alpha)$ is tight at the two endpoints as we can verify that
\begin{equation}
\underline{\mathcal{G}}(\alpha_{i+1})= \mathcal{G}(\alpha_{i+1}),~
\underline{\mathcal{G}}(\alpha_{i})=\mathcal{G}(\alpha_{i}). \label{alphai}
\end{equation}
Then, the minimum of $\underline{\mathcal{G}}(\alpha)$ over $[\alpha_{i},\alpha_{i+1}]$ provides
a lower bound of (\ref{newb0}). By simple computation,  we have
\[
\min_{\alpha\in[\alpha_i,\alpha_{i+1}]}\underline{\mathcal{G}}(\alpha)=\left\{\begin{array}{ll}
2\sqrt{ c_1c_2}+c_3,&{\rm if}~c_1>0, c_2>0, \alpha_i<\frac{\sqrt{c_2}}{\sqrt{c_1}}<\alpha_{i+1},\\
\underline{\mathcal{G}}(\alpha_{i+1}),& {\rm if}~c_1>0, c_2>0,
\alpha_{i+1}\le \frac{\sqrt{c_2}}{\sqrt{c_1}},\\
\underline{\mathcal{G}}(\alpha_{i}),& {\rm if}~c_1>0, c_2>0,
\alpha_{i}\ge \frac{\sqrt{c_2}}{\sqrt{c_1}},\\
\underline{\mathcal{G}}(\alpha_i),& {\rm if}~c_1>0,c_2\le0, \\
\underline{\mathcal{G}}(\alpha_{i+1}),& {\rm if}~c_1\le0,c_2>0, \\
\min\left\{\underline{\mathcal{G}}(\alpha_{i+1}),
\underline{\mathcal{G}}(\alpha_{i})\right\},& {\rm if}~ c_1\le0,c_2\le0.\\
\end{array}\right.
\]
As a summary, we have the following result.
\begin{thm}\label{main}
Let $c_1,c_2$ and $c_3$ be defined in (\ref{c1})-(\ref{c3}), respectively. If
\begin{equation}
c_1>0, ~c_2>0, ~\widetilde{\alpha}:=\sqrt{\frac{c_2}{c_1}}\in(\alpha_i,\alpha_{i+1}), \label{cod}
\end{equation}
then  we have
\begin{equation}
\min_{\alpha\in[\alpha_i,\alpha_{i+1}]}\mathcal{G}(\alpha)
\ge \min_{\alpha\in[\alpha_i,\alpha_{i+1}]}\underline{\mathcal{G}}(\alpha)=
2\sqrt{ c_1c_2}+c_3, \nonumber
\end{equation}
where $\widetilde{\alpha}$ is the unique minimizer of $\underline{\mathcal{G}}(\alpha)$  over $[\alpha_i,\alpha_{i+1}]$.
If (\ref{cod}) does not hold, we have
\begin{equation}
\min_{\alpha\in[\alpha_i,\alpha_{i+1}]}\mathcal{G}(\alpha)
=\min\left\{\mathcal{G}(\alpha_{i}),\mathcal{G}(\alpha_{i+1})\right\}.\nonumber
\end{equation}
\end{thm}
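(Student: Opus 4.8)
The plan is to derive the theorem from three ingredients, two of which are already in hand from the preceding discussion. The first ingredient is the pointwise bound $\underline{\mathcal{G}}(\alpha)\le\mathcal{G}(\alpha)$ for every $\alpha\in[\alpha_i,\alpha_{i+1}]$: for each fixed $\lambda$ the exact triple $(y_p,y_h,y_g)=\bigl(p(x(\lambda,\alpha)),h(x(\lambda,\alpha)),g(x(\lambda,\alpha))\bigr)$ is feasible for the inner minimization in (\ref{lp:1})--(\ref{lp:3}) by (\ref{G3}), so the inner optimal value is no larger than $\frac{p(x(\lambda,\alpha))}{\alpha}+h(x(\lambda,\alpha))-\lambda g(x(\lambda,\alpha))+\alpha\lambda$, and taking the outer maximum over $\lambda$ reproduces exactly $\mathcal{G}(\alpha)$ by (\ref{G2}). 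The second ingredient is the endpoint tightness (\ref{alphai}), obtained by substituting (\ref{c1})--(\ref{c3}) into the closed form (\ref{newub}). The third ingredient is the elementary shape of the rational map $\underline{\mathcal{G}}(\alpha)=c_1\alpha+c_2/\alpha+c_3$ on $(0,\infty)$, whose derivative is $c_1-c_2/\alpha^2$ and whose second derivative is $2c_2/\alpha^3$. From the first ingredient one gets the master inequality $\min_{[\alpha_i,\alpha_{i+1}]}\underline{\mathcal{G}}\le\min_{[\alpha_i,\alpha_{i+1}]}\mathcal{G}$, which drives both cases.

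Under condition (\ref{cod}) I would argue that $\underline{\mathcal{G}}$ is strictly convex on $(0,\infty)$ because $c_2>0$, so its unique unconstrained minimizer is the stationary point $\widetilde{\alpha}=\sqrt{c_2/c_1}$ (the positive root of $c_1-c_2/\alpha^2=0$, well defined since $c_1,c_2>0$); since $\widetilde{\alpha}\in(\alpha_i,\alpha_{i+1})$ by hypothesis, it is also the unique minimizer over the interval, and its value is $c_1\widetilde{\alpha}+c_2/\widetilde{\alpha}+c_3=2\sqrt{c_1c_2}+c_3$ (equivalently, AM--GM on $c_1\alpha+c_2/\alpha$). Combining with the master inequality yields $\min_{[\alpha_i,\alpha_{i+1}]}\mathcal{G}\ge 2\sqrt{c_1c_2}+c_3$, which is the first assertion.

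When (\ref{cod}) fails I would run through the sign possibilities, which is precisely the content of the case table displayed just before the theorem: if $c_1>0$ and $c_2\le0$ then $c_1-c_2/\alpha^2>0$ so $\underline{\mathcal{G}}$ is strictly increasing, with interval minimum at $\alpha_i$; if $c_1\le0$ and $c_2>0$ it is strictly decreasing, with interval minimum at $\alpha_{i+1}$; if $c_1\le0$ and $c_2\le0$ the stationary point (when it exists) is a maximum, so again the minimum is at an endpoint; and if $c_1,c_2>0$ but $\widetilde{\alpha}\notin(\alpha_i,\alpha_{i+1})$ the strictly convex $\underline{\mathcal{G}}$ is monotone on $[\alpha_i,\alpha_{i+1}]$, hence minimized at an endpoint. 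In every subcase $\min_{[\alpha_i,\alpha_{i+1}]}\underline{\mathcal{G}}=\min\{\underline{\mathcal{G}}(\alpha_i),\underline{\mathcal{G}}(\alpha_{i+1})\}$, which equals $\min\{\mathcal{G}(\alpha_i),\mathcal{G}(\alpha_{i+1})\}$ by (\ref{alphai}). I would then close by a sandwich: on one hand $\min\{\mathcal{G}(\alpha_i),\mathcal{G}(\alpha_{i+1})\}=\min_{[\alpha_i,\alpha_{i+1}]}\underline{\mathcal{G}}\le\min_{[\alpha_i,\alpha_{i+1}]}\mathcal{G}$ by the master inequality; on the other hand $\min_{[\alpha_i,\alpha_{i+1}]}\mathcal{G}\le\min\{\mathcal{G}(\alpha_i),\mathcal{G}(\alpha_{i+1})\}$ since $\alpha_i$ and $\alpha_{i+1}$ are themselves feasible. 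Equality throughout gives $\min_{[\alpha_i,\alpha_{i+1}]}\mathcal{G}=\min\{\mathcal{G}(\alpha_i),\mathcal{G}(\alpha_{i+1})\}$.

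All the computation here is routine; the only mildly tedious step is verifying (\ref{alphai}) by direct substitution, and the only step requiring care is the exhaustive sign bookkeeping for $\underline{\mathcal{G}}$, both of which are summarized above. The genuinely important observation — the one that makes the second case an equality rather than a mere inequality — is that once the weak-duality underestimator $\underline{\mathcal{G}}$ is known to agree with $\mathcal{G}$ at both endpoints and to be minimized there, the lower bound $\min\underline{\mathcal{G}}$ and the trivial upper bound $\min\{\mathcal{G}(\alpha_i),\mathcal{G}(\alpha_{i+1})\}$ coincide and thereby pin down $\min_{[\alpha_i,\alpha_{i+1}]}\mathcal{G}$ exactly; everything else is bookkeeping.
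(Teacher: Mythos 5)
Your proposal is correct and follows essentially the same route as the paper, which establishes the theorem through the derivation preceding its statement: the weak-duality underestimation $\underline{\mathcal{G}}\le\mathcal{G}$ obtained from the LP relaxation (\ref{lp:1})--(\ref{lp:3}) and its dual, the endpoint tightness (\ref{alphai}), and the elementary case analysis of minimizing $c_1\alpha+c_2/\alpha+c_3$ over $[\alpha_i,\alpha_{i+1}]$. Your explicit feasibility argument for the pointwise bound and the closing sandwich in the second case merely spell out steps the paper leaves implicit.
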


\subsection{A new branch-and-bound algorithm(BTD Algorithm)}
In this subsection, we employ a branch-and-bound algorithm to solve $({\rm PM})$ (\ref{1dim}) based on the above novel underestimation. For the rule of branching, we adopt the $\omega$-subdivision approach, i.e., we select $\widetilde{\alpha}$ defined in (\ref{cod}), the minimizer of the underestimating function $\underline{\mathcal{G}}(\alpha)$, to subdivide the current interval $[\alpha_{i},\alpha_{i+1}]$.  The whole algorithm is listed as follows.

\begin{center}
\fbox{\shortstack[l]{
{\bf BTD Algorithm}\\
1.~Input: $A\in \Bbb R^{m\times n}$, $b\in \Bbb R^m$, $L\in \Bbb R^{k\times n}$, $\rho> 0$,\\ ~~~~~~~~~~~~and $\epsilon>0$: the tolerance parameter.\\
2.~If $b=0$, let $\alpha^*=1$ and go to Step 8. Otherwise, goto Step 3. \\
3.~If $A^Tb\neq 0$, set $\alpha_{1}$ as the lower bound given in Theorem \ref{thm:lowb},\\
~~~else set $\alpha_{1}=\frac{\|b\|^2}{\|b\|^2-\epsilon}$ according to Proposition \ref{prop1}. \\
~~~Let $\alpha_{2}$ be the upper bound given in Theorem \ref{thm:newub}.\\
~~~For $i=1,2$ compute $\mathcal{G}(\alpha_{i})$ and Lagrange multiplier
$\lambda_{i}$ for $\|x\|^2=\alpha-1$.\\
~~~Set $k=2$ (the number of functional evaluations (i.e., iterations)).\\
~~~Set $UB=\mathcal{G}(\alpha_{1})$,  $\alpha^*=\alpha_1$ and $T=\emptyset$. \\
~~~If $\mathcal{G}(\alpha_{2})<UB$, update $UB=\mathcal{G}(\alpha_{2})$ and $\alpha^*=\alpha_2$.\\
4.~If (\ref{cod}) does not hold for $[\alpha_i,\alpha_{i+1}]=:[\alpha_1,\alpha_{2}]$, go to Step 8. \\
~~~Otherwise, set $\tilde \alpha$ as in (\ref{cod}) where $[\alpha_i,\alpha_{i+1}]=:[\alpha_1,\alpha_{2}]$.\\
~~~Evaluate $\mathcal{G}(\tilde\alpha)$ and Lagrange multiplier $\tilde\lambda$ for $\|x\|^2=\alpha-1$.\\
~~~If $\mathcal{G}(\tilde\alpha) < UB$, update $UB= \mathcal{G}(\tilde\alpha)$ and $\alpha^*=\tilde \alpha$.\\
5.~Use Theorem \ref{main} to compute the lower bounds over $[\alpha_i,\alpha_{i+1}]=:[\alpha_{1},\tilde\alpha]$ \\
~~~and $[\alpha_i,\alpha_{i+1}]=:[\tilde \alpha,\alpha_{2}]$, denoted by $LB_1$ and $LB_2$, respectively.\\
~~~If $LB_1<UB-\epsilon$, update $T:=T\cup\{(LB_1,\alpha_{1},\tilde \alpha)\}$.\\
~~~If $LB_2<UB-\epsilon$, update $T:=T\cup\{(LB_2,\tilde\alpha,\alpha_{2})\}$.\\
~~~Update $k:=k+1$.\\
6.~If $T=\emptyset$, go to Step 7. Otherwise, find $(LB^*,\alpha_1,\alpha_2):= \arg \min\limits_{(t,*,*)\in T} t$.\\
~~~If $LB^*\geq UB-\epsilon$, go to Step 7, otherwise, update   \\
~~~$T:=T\setminus\{(LB^*,\alpha_1,\alpha_2)\}$ and go to Step 4.\\
7.~Output $\alpha^*$: an approximately optimal solution of $({\rm PM})$ (\ref{1dim}).\\
8.~Output $\alpha^*$: an exact global minimizer of $({\rm PM})$ (\ref{1dim}).
}}
\end{center}

Since there is no detailed data of the counterexample in \cite{B06}, in the following we give a new exceptional example where $G{\alpha}$ is not unimodal.

\begin{exam}\label{exm}
Let $m = n =2$, $k = 1$ and
\[
A=\left(\begin{matrix} 0.4 & 0.8 \\ 0.2 & 1 \end{matrix}\right),~b=\left(\begin{matrix} 0.1 \\  0.5 \end{matrix}\right),~ L=\left(\begin{matrix}  0.1&  0.8\end{matrix}\right),~\rho=0.5.
\]
With the same setting $\epsilon_1=10^{-1},~\epsilon_2=10^{-6}$ as given in \cite{B06},
after $35$ iterations, Algorithm TRTLSG  finds a local non-global minimizer $\widetilde{x}=(3.2209,-0.4897)^T$ with the objective function value $0.0673$ and $\widetilde{\alpha}=\|\widetilde{x}\|^2+1\approx 11.6140$. Actually, the global minimizer of $(\rm PM)$ (\ref{1dim}) is  $\alpha^*\approx 1.6300$ and the corresponding objective value is $v(\rm PM)\approx 0.0634$. The function $\mathcal{G}(\alpha)$ for this example is plotted in Figure \ref{fig1}, see Section 5.
\end{exam}

We show the details of applying our new algorithm to solve Example \ref{exm} with the setting $\epsilon=10^{-6}$. It follows from Theorems \ref{thm:lowb} and \ref{thm:newub} that
\[
\alpha_1=\alpha_{\min}=1.0266,~  \alpha_2=\alpha_{\max}=3355.5794.
\]
As a contrast, the upper bound  (\ref{ubb2}) given in \cite{B06} is
$17551.0566$.
Following the $\omega$-subdivision approach, the first subdividing point (\ref{cod}) is given by
\[
\alpha_3=\widetilde{\alpha}={\rm arg}\min_{\alpha\in[\alpha_1,\alpha_{2}]}\underline{\mathcal{G}}(\alpha)=59.1724.
\]
The next $12$ iterations are plotted in Figure \ref{fig1} and then the stopping criterion is reached. It returns a global approximation solution $x^*=(-0.6541,0.4496)^T$ with $\alpha^*=\|x^*\|^2+1\approx1.6300$.  It is observed that the algorithm based on the $\omega$-subdivision is much more efficient than that based on bisection. Moreover, in each iteration, our new lower bound is tight for one of the two subintervals divided from
 the current interval. Consequently, there is
 no  need to  subdivide this subinterval  in the following iterations.

\begin{figure}[h]
\centering
  \includegraphics[width=9cm]{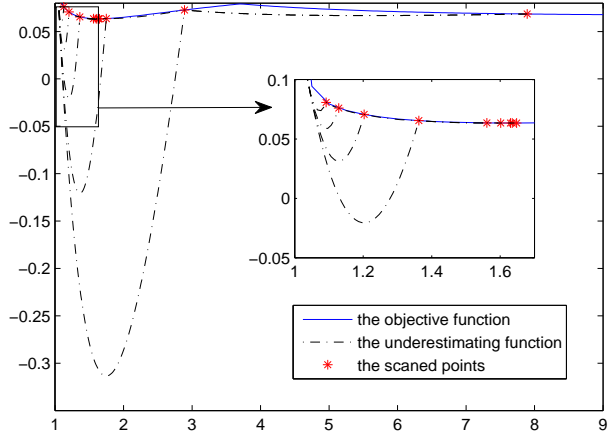}
  \caption{The last $12$ iterations of our new algorithm for solving Example \ref{exm}.
 }
  \label{fig1}
\end{figure}

Let $\alpha^*$ be the solution obtained by our new algorithm. It holds that
\begin{equation}
v({\rm P_G})\le \mathcal{G}(\alpha^*) \le v({\rm P_G})+\epsilon.\label{apps}
\end{equation}
Throughout this paper, any $\alpha^*\ge1$  satisfying (\ref{apps}) is called a global $\epsilon$-approximation solution of $({\rm P_G})$.

In order to study the worst-case computational complexity of our new algorithm, we need the following lemma.
\begin{lem}\label{lem2}
Let $\lambda(\alpha)$ be the Lagrangian multiplier of (TRS) (\ref{1dim2}) (i.e., the $\lambda$-solution of the KKT system (\ref{KKT1})-(\ref{KKT3})). Then, if $\alpha_{\min}>1$,  $\lambda(\alpha)$ is bounded over $[\alpha_{\min},\alpha_{\max}]$:
\begin{equation}
|\lambda(\alpha)| \le U:=\frac{\|A^Tb\|}{\alpha_{\min}\sqrt{\alpha_{\min}-1}}+ \lambda_{\min}\left(\frac{1}{\alpha_{\min}}A^TA+ \rho L^TL\right).\label{lam:ud}
\end{equation}
\end{lem}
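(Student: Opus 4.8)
The plan is to use the two KKT conditions (\ref{KKT1}) and (\ref{KKT3}) to bound $\lambda(\alpha)$ from above and below separately. For the upper bound, I would start from (\ref{KKT1}), namely $(Q_\alpha - \lambda(\alpha) I)x(\alpha) = f_\alpha$, take inner product with $x(\alpha)$, and rearrange to isolate $\lambda(\alpha)\|x(\alpha)\|^2 = x(\alpha)^T Q_\alpha x(\alpha) - f_\alpha^T x(\alpha)$. Since $Q_\alpha \succeq \lambda(\alpha) I$ is not directly helpful here, instead I would bound $\lambda(\alpha)\|x(\alpha)\|^2 \le x(\alpha)^T Q_\alpha x(\alpha) + \|f_\alpha\|\,\|x(\alpha)\|$ using Cauchy--Schwarz on the cross term, then divide through by $\|x(\alpha)\|^2 = \alpha - 1$ (which is positive because $\alpha \ge \alpha_{\min} > 1$). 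This gives $\lambda(\alpha) \le \lambda_{\max}(Q_\alpha) + \|f_\alpha\|/\sqrt{\alpha-1}$. Recalling $Q_\alpha = \frac1\alpha A^TA + \rho L^TL$ and $f_\alpha = \frac1\alpha A^Tb$, and using that $\alpha \mapsto \frac1\alpha$ is decreasing so the extreme values on $[\alpha_{\min},\alpha_{\max}]$ occur at $\alpha_{\min}$, I would replace $\lambda_{\max}(Q_\alpha)$ by $\lambda_{\max}(\frac{1}{\alpha_{\min}}A^TA + \rho L^TL)$ and $\|f_\alpha\|/\sqrt{\alpha-1}$ by $\|A^Tb\|/(\alpha_{\min}\sqrt{\alpha_{\min}-1})$.

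Wait — the stated bound in (\ref{lam:ud}) has $\lambda_{\min}$ rather than $\lambda_{\max}$, so the upper-bound direction must actually come from the constraint (\ref{KKT3}). Here is the corrected route: from (\ref{KKT3}), $Q_\alpha - \lambda(\alpha) I \succeq 0$ forces $\lambda(\alpha) \le \lambda_{\min}(Q_\alpha) \le \lambda_{\min}(\frac{1}{\alpha_{\min}}A^TA + \rho L^TL)$, where the last inequality again uses monotonicity of $\frac1\alpha$ together with the fact that adding $\frac1\alpha A^TA \succeq 0$ with a smaller coefficient can only decrease the minimal eigenvalue. That handles $\lambda(\alpha)$ from above and contributes the second term of $U$. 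For the lower bound on $\lambda(\alpha)$, I would return to the inner-product identity from (\ref{KKT1}): $\lambda(\alpha)(\alpha-1) = x(\alpha)^T Q_\alpha x(\alpha) - f_\alpha^T x(\alpha) \ge -\|f_\alpha\|\,\|x(\alpha)\| = -\|f_\alpha\|\sqrt{\alpha-1}$, using $Q_\alpha \succeq 0$ (true by Lemma \ref{lem1}, or even just because $Q_\alpha \succeq \lambda(\alpha)I$ and we can also note $x^TQ_\alpha x \ge 0$). Dividing by $\alpha - 1 > 0$ gives $\lambda(\alpha) \ge -\|f_\alpha\|/\sqrt{\alpha-1} \ge -\|A^Tb\|/(\alpha_{\min}\sqrt{\alpha_{\min}-1})$, which is the negative of the first term of $U$.

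Combining the two one-sided estimates, $-\|A^Tb\|/(\alpha_{\min}\sqrt{\alpha_{\min}-1}) \le \lambda(\alpha) \le \lambda_{\min}(\frac{1}{\alpha_{\min}}A^TA+\rho L^TL)$, and taking the larger of the two absolute values, yields $|\lambda(\alpha)| \le U$ with $U$ as in (\ref{lam:ud}), since $U$ is precisely the sum of the two nonnegative bounding quantities (and a sum dominates the max). I expect the main subtlety to be the monotonicity argument for passing from $\alpha$ to $\alpha_{\min}$ in the two terms — one has to be careful that decreasing $\alpha$ (hence increasing $\frac1\alpha$) moves $\lambda_{\min}(Q_\alpha)$ in the right direction and simultaneously controls $\|f_\alpha\|/\sqrt{\alpha-1}$, whose numerator $\frac1\alpha\|A^Tb\|$ decreases in $\alpha$ while the denominator $\sqrt{\alpha-1}$ increases in $\alpha$, so the whole ratio is decreasing in $\alpha$ and is therefore maximized at $\alpha_{\min}$. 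Everything else is a routine application of Cauchy--Schwarz and the KKT conditions of Theorem \ref{thm:trs}.
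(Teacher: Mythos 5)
Your proof is correct, and it reaches the bound by a slightly different route than the paper on the lower-bound half. Both arguments use (\ref{KKT3}) identically to get $\lambda(\alpha)\le\lambda_{\min}(Q_\alpha)\le\lambda_{\min}\bigl(\tfrac{1}{\alpha_{\min}}A^TA+\rho L^TL\bigr)$, and both push $\alpha$ to $\alpha_{\min}$ by the same monotonicity observations. The difference is in how the lower bound on $\lambda(\alpha)$ is extracted from (\ref{KKT1})--(\ref{KKT2}). The paper first splits off the degenerate case $\lambda(\alpha)=\lambda_{\min}(Q_\alpha)$, then in the generic case writes $x(\alpha)=(Q_\alpha-\lambda(\alpha)I)^{-1}f_\alpha$, substitutes into $\|x(\alpha)\|^2=\alpha-1$, and bounds the operator norm of the inverse to obtain $\lambda_{\min}(Q_\alpha)-\lambda(\alpha)\le\|f_\alpha\|/\sqrt{\alpha-1}$. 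You instead pair the stationarity equation with $x(\alpha)$, use $x(\alpha)^TQ_\alpha x(\alpha)\ge 0$ and Cauchy--Schwarz, and get $\lambda(\alpha)\ge-\|f_\alpha\|/\sqrt{\alpha-1}$ directly. Your inequality is marginally weaker than the paper's (you discard the nonnegative term $\lambda_{\min}(Q_\alpha)$ from the lower bound), but both deliver the same $U$, since the final step just adds the two nonnegative quantities. What your route buys is the avoidance of any case analysis and of the matrix inverse: it works verbatim in the hard case where $Q_\alpha-\lambda(\alpha)I$ is singular, which the paper has to dismiss separately with the remark that (\ref{lbd2}) ``trivially holds'' there. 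The false start in your first paragraph (aiming at $\lambda_{\max}(Q_\alpha)$) is harmless since you catch and correct it yourself; the final assembled argument is sound.
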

\begin{proof}
It follows from the KKT system (\ref{KKT1})-(\ref{KKT3}) that if
$\lambda(\alpha)\neq \lambda_{\min}(Q_{\alpha})$ then
$\lambda(\alpha)< \lambda_{\min}(Q_{\alpha})$ and
\begin{equation}
\|(Q_{\alpha}-\lambda(\alpha)I)^{-1}f_{\alpha}\|^2= \alpha-1. \label{seq:1}
\end{equation}
Notice that
\begin{eqnarray}
\|(Q_{\alpha}-\lambda(\alpha)I)^{-1}f_{\alpha}\|^2
&\le& \lambda_{\max}^2\left(\left(Q_{\alpha}-\lambda(\alpha)I\right)^{-1}\right)\|f_{\alpha}\|^2 \nonumber\\
&=& 
\frac{\|f_{\alpha}\|^2}{\left(\lambda_{\min}\left(Q_{\alpha}\right)-\lambda(\alpha)\right)^2}.\label{seq:2}
\end{eqnarray}
Plugging (\ref{seq:2}) in (\ref{seq:1}) yields
\[
\frac{\|f_{\alpha}\|^2}{\left(\lambda_{\min}\left(Q_{\alpha}\right)-\lambda(\alpha)\right)^2} \ge \alpha-1,
\]
which further implies that
\begin{equation}
|\lambda(\alpha)|\le \frac{\|f_{\alpha}\|}{\sqrt{\alpha-1}}+ \lambda_{\min}\left(Q_{\alpha}\right).\label{lbd2}
\end{equation}
Notice that the inequality (\ref{lbd2}) trivially holds true for the other case $\lambda(\alpha)= \lambda_{\min}(Q_{\alpha})$.
Then, according to (\ref{lbd2}) and the definitions of $Q_{\alpha}$  and $f_{\alpha}$ (\ref{Qf}), we obtain the upper bound (\ref{lam:ud}) over the interval $[\alpha_{\min},\alpha_{\max}]$.
\end{proof}

\begin{thm}\label{complex}
If $A^Tb\neq 0$, our new algorithm finds a global $\epsilon$-approximation solution of $({\rm P_G})$ (\ref{1dim}) in  at most
\begin{equation}
\left\lceil\frac{4U\alpha_{\max}^2(\alpha_{\max}-\alpha_{\min})}{ \alpha_{\min}^2~\epsilon}\right\rceil\label{comp:1}
\end{equation}
iterations,
where  $U$ is defined in (\ref{lam:ud}), $\alpha_{\min}>1$ and $\alpha_{\max}$ are constant real numbers defined in
Theorems \ref{thm:lowb} and \ref{thm:newub}, respectively. Moreover, suppose the assumption (\ref{uniq}) holds for all $\alpha>1$,
in order to find a global $\epsilon$-approximation solution of $({\rm P_G})$, our new algorithm requires  at most
\begin{equation}
\left\lceil\frac{2\widetilde{U}\sqrt{\alpha_{\max}}
(\alpha_{\max}-\alpha_{\min})}{ \alpha_{\min}~\cdot \sqrt{\epsilon}}\right\rceil\label{comp:2}
\end{equation}
iterations, where
\begin{equation}
\widetilde{U}=\max_{\alpha\in[\alpha_{\min},
\alpha_{\max}]}\lambda(\alpha)+\alpha\lambda'(\alpha), \label{Util}
\end{equation}
is a well-defined finite number and $\lambda(\alpha)$ is the $\lambda$-solution of (\ref{KKT1})-(\ref{KKT3}).
\end{thm}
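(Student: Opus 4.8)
The plan is to bound the number of subdivisions performed by the BTD Algorithm, since the number of iterations equals that number plus the two initial function evaluations at $\alpha_1,\alpha_2$ and each pass through Steps~4--5 performs exactly one further evaluation (at the subdivision point). The whole argument rests on a single observation about the \emph{underestimator itself}. Every interval $[\alpha_i,\alpha_{i+1}]$ that is subdivided has first been put on the list $T$ (the only exception being the initial interval $[\alpha_1,\alpha_2]$), and an interval enters $T$ only if its stored lower bound $LB=\min_{[\alpha_i,\alpha_{i+1}]}\underline{\mathcal{G}}$ satisfies $LB<UB-\epsilon$; moreover, by the time $[\alpha_i,\alpha_{i+1}]$ is created both $\mathcal{G}(\alpha_i)$ and $\mathcal{G}(\alpha_{i+1})$ have already been evaluated, so $UB\le\min\{\mathcal{G}(\alpha_i),\mathcal{G}(\alpha_{i+1})\}=\min\{\underline{\mathcal{G}}(\alpha_i),\underline{\mathcal{G}}(\alpha_{i+1})\}$ by (\ref{alphai}). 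Hence the ``dip'' of the underestimator below its endpoint values,
\[
D:=\min\{\underline{\mathcal{G}}(\alpha_i),\underline{\mathcal{G}}(\alpha_{i+1})\}-\min_{\alpha\in[\alpha_i,\alpha_{i+1}]}\underline{\mathcal{G}}(\alpha),
\]
must exceed $\epsilon$ for every subdivided interval. (If (\ref{cod}) fails then $D=0$ by Theorem~\ref{main}, so such an interval never enters $T$; thus only intervals on which (\ref{cod}) holds, i.e. with $c_1,c_2>0$ and $\widetilde{\alpha}=\sqrt{c_2/c_1}\in(\alpha_i,\alpha_{i+1})$, are ever subdivided.)

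The second step is to evaluate $D$ exactly. Using $\underline{\mathcal{G}}(\alpha)=c_1\alpha+c_2/\alpha+c_3$, $\min_{[\alpha_i,\alpha_{i+1}]}\underline{\mathcal{G}}=2\sqrt{c_1c_2}+c_3$, and completing the square one gets
\[
\underline{\mathcal{G}}(\alpha_i)-\min_{[\alpha_i,\alpha_{i+1}]}\underline{\mathcal{G}}=\Bigl(\sqrt{c_1\alpha_i}-\sqrt{c_2/\alpha_i}\Bigr)^{2}=\frac{c_1(\widetilde{\alpha}-\alpha_i)^{2}}{\alpha_i},\qquad \underline{\mathcal{G}}(\alpha_{i+1})-\min_{[\alpha_i,\alpha_{i+1}]}\underline{\mathcal{G}}=\frac{c_1(\alpha_{i+1}-\widetilde{\alpha})^{2}}{\alpha_{i+1}},
\]
so $D$ is the \emph{smaller} of these two quantities. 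Consequently ``$D>\epsilon$'' forces \emph{both} $c_1(\widetilde{\alpha}-\alpha_i)^2/\alpha_i>\epsilon$ and $c_1(\alpha_{i+1}-\widetilde{\alpha})^2/\alpha_{i+1}>\epsilon$; since $\alpha_i,\alpha_{i+1}\le\alpha_{\max}$, both children $[\alpha_i,\widetilde{\alpha}]$ and $[\widetilde{\alpha},\alpha_{i+1}]$ of a subdivided interval then have length exceeding $\sqrt{\alpha_{\min}\epsilon/c_1}$, and, since $D\le c_1(\alpha_{i+1}-\alpha_i)^2/(4\alpha_{\min})$, the parent's length exceeds $2\sqrt{\alpha_{\min}\epsilon/c_1}$ as well. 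It remains to bound $c_1=\bigl(\alpha_{i+1}\lambda(\alpha_{i+1})-\alpha_i\lambda(\alpha_i)\bigr)/(\alpha_{i+1}-\alpha_i)$. In general $|\lambda(\cdot)|\le U$ by Lemma~\ref{lem2} (valid as $\alpha_{\min}>1$), giving $c_1\le 2\alpha_{\max}U/(\alpha_{i+1}-\alpha_i)$; feeding this back together with the just-derived lower bound on the parent's length shows every child of every subdivided interval --- hence every leaf of the branch-and-bound tree --- has length at least $\delta_1:=\alpha_{\min}\epsilon/(\alpha_{\max}U)$. Under assumption (\ref{uniq}), Theorem~\ref{grad} makes $\alpha\mapsto\lambda(\alpha)$ of class $C^\infty$ on the compact set $[\alpha_{\min},\alpha_{\max}]$, so $\widetilde{U}$ in (\ref{Util}) is a well-defined finite number and, by the mean value theorem, $c_1\le\widetilde{U}$; the same estimate then gives that every leaf has length at least $\delta_2:=\sqrt{\alpha_{\min}\epsilon/\widetilde{U}}$, which is $\Theta(\sqrt\epsilon)$ rather than $\Theta(\epsilon)$.

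Finally, after $N$ subdivisions the $N+1$ leaf intervals partition $[\alpha_{\min},\alpha_{\max}]$, so $N+1\le(\alpha_{\max}-\alpha_{\min})/\delta$ with $\delta\in\{\delta_1,\delta_2\}$ (at most one leaf --- a short child of the initial interval $[\alpha_1,\alpha_2]$ --- may be shorter, which only shifts the count by $1$, absorbed by the generous constants and the ceiling in (\ref{comp:1})--(\ref{comp:2})). Plugging in $\delta_1$ and $\delta_2$ and adding the two endpoint evaluations gives the bounds (\ref{comp:1}) and (\ref{comp:2}). The step I expect to be the crux is the second one: recognizing that the quantity governing termination is the dip $D$ of the \emph{underestimator} --- which, through the closed form $c_1(\widetilde{\alpha}-\alpha_i)^2/\alpha_i$, is automatically quadratic in the subinterval length whenever $c_1$ stays bounded --- rather than the pointwise gap $\mathcal{G}-\underline{\mathcal{G}}$, and, crucially, that ``$D>\epsilon$'' constrains \emph{both} children, so the subdivision point $\widetilde{\alpha}$ can never crowd arbitrarily close to an endpoint; this is exactly what makes the leaf-counting argument valid. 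Verifying, in the differentiable case, that $\widetilde{U}$ is finite (via Theorem~\ref{grad} and compactness) and that $c_1\le\widetilde{U}$ is the remaining technical point.
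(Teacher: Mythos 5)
Your proof is correct and shares the paper's overall strategy --- relate the pruning condition $LB<UB-\epsilon$ to the length of the interval via the coefficient $c_1$, bound $c_1$ by $(\alpha_i+\alpha_{i+1})U/(\alpha_{i+1}-\alpha_i)$ using Lemma~\ref{lem2} in the general case and by $\widetilde{U}$ via the mean value theorem in the differentiable case, and then count intervals --- but the central estimate is executed differently, and in a way that is worth highlighting. The paper lower-bounds $\underline{\mathcal{G}}$ on $[\alpha_i,\alpha_{i+1}]$ by its tangent line at $\alpha_i$ (using convexity under (\ref{cod})) and concludes only that the \emph{selected} interval has length at least some $\delta=\Theta(\epsilon)$ (resp.\ $\Theta(\sqrt\epsilon)$); it then asserts the iteration count $(\alpha_{\max}-\alpha_{\min})/\delta$. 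Since subdivided intervals are nested rather than disjoint, and since $\widetilde{\alpha}$ could a priori sit arbitrarily close to an endpoint, a lower bound on the parents' lengths alone does not immediately bound the number of subdivisions. You instead compute the dip of the underestimator exactly, $\underline{\mathcal{G}}(\alpha_i)-\min\underline{\mathcal{G}}=c_1(\widetilde{\alpha}-\alpha_i)^2/\alpha_i$ and symmetrically at $\alpha_{i+1}$ (both identities check out), and observe that $D>\epsilon$ constrains \emph{both} children; this yields a uniform lower bound on the lengths of all leaves and makes the partition-counting argument airtight. So your route is a sharpening of the paper's that closes a genuine looseness in its counting step. Two small caveats: the justification ``since $\alpha_i,\alpha_{i+1}\le\alpha_{\max}$'' should read $\alpha_i,\alpha_{i+1}\ge\alpha_{\min}$ (that is the direction you actually use), and your final constants do not literally reproduce (\ref{comp:1})--(\ref{comp:2}) --- in the first case you get something smaller by a factor of $4\alpha_{\max}/\alpha_{\min}$, while in the second your count $\sqrt{\widetilde{U}}(\alpha_{\max}-\alpha_{\min})/\sqrt{\alpha_{\min}\epsilon}$ is only dominated by (\ref{comp:2}) when $\widetilde{U}\ge\alpha_{\min}/(4\alpha_{\max})$. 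The orders $O(1/\epsilon)$ and $O(1/\sqrt{\epsilon})$ are exactly right, and the paper's own displayed derivation likewise produces $\sqrt{2\widetilde{U}\alpha_{\max}}$ rather than the $2\widetilde{U}\sqrt{\alpha_{\max}}$ appearing in (\ref{comp:2}), so this is a bookkeeping discrepancy shared with the source rather than a flaw in your argument.
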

\begin{proof}
Suppose $(LB,\alpha_i,\alpha_{i+1})\in T$ is selected to subdivide in the current iteration of our new algorithm. Then, we have $LB=LB^*$. Without loss of generality, we assume that
(\ref{cod}) holds in the interval $[\alpha_i,\alpha_{i+1}]$, since otherwise, it follows from  Theorem \ref{main} that $LB=UB$ and hence the algorithm has to stop.

The condition (\ref{cod}) implies that the underestimating function $\underline{\mathcal{G}}(\alpha)$ (\ref{newub}) is convex. Therefore, for any $\alpha\in[\alpha_{\min},\alpha_{\max}]$, we have
\begin{eqnarray}
\underline{\mathcal{G}}(\alpha)&\ge& \underline{\mathcal{G}}(\alpha_i)+\underline{\mathcal{G}}'(\alpha_i)(\alpha-\alpha_i)\nonumber\\
&
=&\mathcal{G}(\alpha_i)+\left(c_1-\frac{c_2}{\alpha_i^2}\right)(\alpha-\alpha_i)\nonumber\\
&
\ge&\mathcal{G}(\alpha_i)+\left(c_1-\frac{c_1\alpha^2_{i+1}}{\alpha_i^2}\right)
(\alpha-\alpha_i)\nonumber\\
&\ge&\mathcal{G}(\alpha_i)- \frac{\alpha^2_{i+1}-\alpha_i^2}{\alpha_i^2}
c_1(\alpha_{i+1}-\alpha_i),\label{G4}
\end{eqnarray}
where the first equality follows from (\ref{alphai}) and the second inequality holds due to the third inequality of (\ref{cod}).

According to the definition (\ref{c1}) and Lemma \ref{lem2}, we have
\begin{equation}
c_1(\alpha_{i+1}-\alpha_i)=\alpha_{i+1}\lambda(\alpha_{i+1})  - \alpha_i\lambda(\alpha_i)
\le (\alpha_{i+1}+\alpha_{i})U. \label{c1b}
\end{equation}
By substituting (\ref{c1b}) into (\ref{G4}), we obtain
\begin{eqnarray}
LB^*=\min_{\alpha\in[\alpha_i,\alpha_{i+1}]}\underline{\mathcal{G}}(\alpha)
&\ge& \mathcal{G}(\alpha_i)- \frac{(\alpha_{i+1}+\alpha_i)^2(\alpha_{i+1}-\alpha_i)}{\alpha_i^2}
U\nonumber
\\&\ge&
UB- \frac{4\alpha_{\max}^2(\alpha_{i+1}-\alpha_i)}{\alpha_{\min}^2}
U.\nonumber
\end{eqnarray}
Consequently, the stopping criterion $LB^*>UB- \epsilon$ is reached if
\[
\alpha_{i+1}-\alpha_i
< \frac{\alpha_{\min}^2}{4U\alpha_{\max}^2}\cdot \epsilon.
\]
Therefore, the number of the iterations of our new algorithm can not exceed the upper bound (\ref{comp:1}).

It has been shown in the first part of the proof of Theorem \ref{grad} (see \cite{B06}) that, under the assumption that (\ref{uniq}) holds for all $\alpha>1$, $\lambda(\alpha)$ is differentiable of any order. Therefore, $\widetilde{U}$ (\ref{Util}) is well defined and $\widetilde{U}<+\infty$.  Applying the mean-value theorem to the definition of $c_1$ (\ref{c1}), we have
\[
c_1=\frac{\alpha_{i+1}\lambda(\alpha_{i+1})-\alpha_i\lambda(\alpha_i)}{\alpha_{i+1}-\alpha_i}
= \left(\alpha\lambda(\alpha)\right)'|_{\alpha=\xi} =\lambda(\xi)+\xi\lambda'(\xi)\le \widetilde{U},
\]
where $\xi\in(\alpha_i,\alpha_{i+1})$.

Then, it follows from (\ref{G4}) that
\begin{eqnarray}
LB^*=\min_{\alpha\in[\alpha_i,\alpha_{i+1}]}\underline{\mathcal{G}}(\alpha)
&\ge&\mathcal{G}(\alpha_i)- \frac{(\alpha_{i+1}+\alpha_i)
(\alpha_{i+1}-\alpha_i)^2}{\alpha_i^2}
\widetilde{U} \nonumber\\
&\ge&UB- \frac{2\alpha_{\max}\widetilde{U}
}{\alpha_{\min}^2}(\alpha_{i+1}-\alpha_i)^2.\nonumber
\end{eqnarray}
Then, if
\[
\alpha_{i+1}-\alpha_i
< \frac{\alpha_{\min}}{\sqrt{2\widetilde{U}\alpha_{\max}}}
\cdot \sqrt{\epsilon},
\]
the stopping criterion $LB^*>UB- \epsilon$ is reached. Consequently, (\ref{comp:2}) gives the maximal number of the iterations of our new algorithm in the worst case.
\end{proof}

\begin{cor}
Suppose $A^Tb= 0$ and $b\neq 0$. For any $\epsilon\in(0,\|b\|^2)$,
our new algorithm finds a global $\epsilon$-approximation  solution of $({\rm P_G})$ (\ref{1dim}) in  at most
\begin{equation}
\left\lceil
\frac{4\alpha_{\max}^2(\alpha_{\max}-1)\lambda_{\min}\left( A^TA+ \rho L^TL\right)}{\epsilon}
\right\rceil \label{comp:3}
\end{equation}
iterations, where
$\alpha_{\max}$ is defined in Theorem \ref{thm:newub}.
\end{cor}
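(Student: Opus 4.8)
The plan is to reduce the statement to the iteration‑count argument already carried out in the proof of Theorem~\ref{complex}, exploiting that the hypothesis $A^Tb=0$ forces the linear term $f_\alpha=\frac{1}{\alpha}A^Tb$ to vanish identically and thereby makes the Lagrange–multiplier bound completely explicit. First I would record what the inner subproblem looks like in this degenerate regime: with $f_\alpha\equiv 0$, for every $\alpha>1$ the KKT system (\ref{KKT1})--(\ref{KKT3}) forces the nonzero vector $x(\alpha)$ to be an eigenvector of $Q_\alpha$, and combined with $Q_\alpha-\lambda(\alpha)I\succeq0$ this pins the multiplier down uniquely as
\[
\lambda(\alpha)=\lambda_{\min}(Q_\alpha)=\lambda_{\min}\left(\frac{1}{\alpha}A^TA+\rho L^TL\right).
\]
Thus $\lambda(\alpha)$ is well defined on the whole bracket $[\alpha_{\min},\alpha_{\max}]$ without any appeal to assumption (\ref{uniq}), and the construction of the underestimator $\underline{\mathcal{G}}$, Theorem~\ref{main}, and the BTD Algorithm apply verbatim, since evaluating $\mathcal{G}(\alpha)$ is still an equality (TRS) enjoying strong Lagrangian duality even with a zero linear term.

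Next I would prove the uniform two‑sided bound $0\le\lambda(\alpha)\le\lambda_{\min}(A^TA+\rho L^TL)$ for all $\alpha\ge1$: the lower estimate is immediate from $Q_\alpha\succeq0$, while the upper one follows from $\frac{1}{\alpha}A^TA\preceq A^TA$ (valid for $\alpha\ge1$), which gives $Q_\alpha\preceq A^TA+\rho L^TL$ and hence the corresponding inequality for the minimal eigenvalues, $\lambda_{\min}$ being monotone under the positive semidefinite order. Consequently $|\lambda(\alpha)|\le U:=\lambda_{\min}(A^TA+\rho L^TL)$, so this explicit constant takes over exactly the quantitative role that $U$ from Lemma~\ref{lem2} plays in the proof of Theorem~\ref{complex} (and, unlike there, it needs no assumption $\alpha_{\min}>1$). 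With $U$ so replaced, the estimate (\ref{c1b}) becomes $c_1(\alpha_{i+1}-\alpha_i)=\alpha_{i+1}\lambda(\alpha_{i+1})-\alpha_i\lambda(\alpha_i)\le(\alpha_{i+1}+\alpha_i)U$, the chain (\ref{G4})--(\ref{c1b}) goes through unchanged, and for any interval $[\alpha_i,\alpha_{i+1}]$ actually selected for subdivision one obtains
\[
LB^*\geq UB-\frac{4\alpha_{\max}^2U}{\alpha_{\min}^2}(\alpha_{i+1}-\alpha_i).
\]
Hence the pruning test $LB^*>UB-\epsilon$ is triggered once $\alpha_{i+1}-\alpha_i<\alpha_{\min}^2\epsilon/(4\alpha_{\max}^2U)$, so the number of subdivisions cannot exceed $\lceil 4U\alpha_{\max}^2(\alpha_{\max}-\alpha_{\min})/(\alpha_{\min}^2\epsilon)\rceil$; since here $\alpha_{\min}=\|b\|^2/(\|b\|^2-\epsilon)>1$ for $\epsilon\in(0,\|b\|^2)$, loosening $\alpha_{\min}^2\ge1$ and $\alpha_{\max}-\alpha_{\min}\le\alpha_{\max}-1$ yields the announced bound (\ref{comp:3}). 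The $\epsilon$‑optimality of the returned $\alpha^*$ is, as in the design of the algorithm, a consequence of Proposition~\ref{prop1} together with the fact that $\alpha_{\max}$ of Theorem~\ref{thm:newub} dominates the minimizer of $\mathcal{G}$ over $[\alpha_{\min},\infty)$, so that $\min\{\mathcal{G}(1),\min_{\alpha\in[\alpha_{\min},\alpha_{\max}]}\mathcal{G}(\alpha)\}\le v({\rm PM})+\epsilon$ and the branch‑and‑bound bookkeeping delivers $UB\le v({\rm PM})+\epsilon$ at termination.

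The one place where care is genuinely needed is the first step: checking that with $f_\alpha\equiv0$ the degenerate (TRS) (vanishing linear term) does not break the duality/branching apparatus, i.e.\ that $\lambda(\alpha)$, $\underline{\mathcal{G}}(\alpha)$ and the lower bound of Theorem~\ref{main} remain well defined and valid, and that $\lambda_{\min}(A^TA+\rho L^TL)$ faithfully inherits the role of Lemma~\ref{lem2}. Once that is in hand, the remainder is a direct transcription of the proof of Theorem~\ref{complex} with $U$ replaced throughout by this constant.
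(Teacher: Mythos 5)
Your proof is correct and follows essentially the same route as the paper: both reduce to the iteration count of Theorem~\ref{complex} after observing that $A^Tb=0$ collapses the multiplier bound to $U\le\lambda_{\min}(A^TA+\rho L^TL)$, and then relax $\alpha_{\min}^2\ge 1$ and $\alpha_{\max}-\alpha_{\min}\le\alpha_{\max}-1$. The only cosmetic difference is that you re-derive $\lambda(\alpha)=\lambda_{\min}(Q_\alpha)$ from the KKT system for the degenerate $f_\alpha\equiv 0$ case, whereas the paper simply sets $\|A^Tb\|=0$ in the formula (\ref{lam:ud}) of Lemma~\ref{lem2}; both yield the same constant.
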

\begin{proof}
Under the assumption $A^Tb= 0$ and $b\neq 0$, according to Proposition \ref{prop1},  we have $\alpha_{\min}=\frac{\|b\|^2}{\|b\|^2-\epsilon}>1$.
Then, Lemma \ref{lem2} and Theorem \ref{complex} hold true. It follows from (\ref{lam:ud}) that
\begin{eqnarray}
 U&=&\frac{\|A^Tb\|}{\alpha_{\min}\sqrt{\alpha_{\min}-1}}+ \lambda_{\min}\left(\frac{1}{\alpha_{\min}}A^TA+ \rho L^TL\right)\nonumber\\
 &=&\lambda_{\min}\left(\frac{1}{\alpha_{\min}}A^TA+ \rho L^TL\right)\nonumber\\
  &\le& \lambda_{\min}\left( A^TA+ \rho L^TL\right).\nonumber
\end{eqnarray}
According to Theorem \ref{complex} and the following inequality
\[
\frac{4U\alpha_{\max}^2(\alpha_{\max}-\alpha_{\min})}{ \alpha_{\min}^2~\epsilon}
\le  \frac{4\alpha_{\max}^2(\alpha_{\max}-1)\lambda_{\min}\left( A^TA+ \rho L^TL\right)}{\epsilon},
\]
the proof is complete.
\end{proof}
\begin{rem}
The worst-case computational complexity (\ref{comp:3}) can not be similarly reduced to  $O(1/\sqrt{\epsilon})$ as in (\ref{comp:2}), since for any $\alpha>1$, the
assumption (\ref{uniq}) can not hold true for the case $A^Tb= 0$.
\end{rem}

\section{Numerical experiments}
In this section, we numerically compare the computational efficiency of the improved version of the bisection-based Algorithm TRTLSG \cite{B06} (which is improved by strengthening the lower and upper bounds on the norm of the optimal solution, see Section 3) and our new branch-and-bound algorithm(denoted by BTD Algorithm). Since the stopping criterion in Step $3$ of Algorithm TRTLSG \cite{B06} is different from that of our global optimization algorithm,
for the sake of fairness, we replace the original simple stopping criterion $|\alpha_{\max}- \alpha_{\min}| > \epsilon_2$  with
\begin{equation}
\mathcal{G}(\alpha_{\max})\leq LB^*+\epsilon,\label{stop}
\end{equation}
where $LB^*\in [\mathcal{G}(\alpha^*)-\epsilon,\mathcal{G}(\alpha^*)]$ is a  lower approximation of the optimal value $\mathcal{G}(\alpha^*)$ obtained by calling our new global optimization algorithm in advance.

We numerically test two examples. The first one
is taken from Hansen's Regularization Tools \cite{H1994},
where the function $shaw$ is used to generate the matrix
$A_{\rm true}\in\Bbb R^{n\times n}$, the vector $b_{\rm true}\in\Bbb R^n$
and the true solution $x_{\rm true}\in\Bbb R^n$, i.e.,
we have $A_{\rm true}x_{\rm true}=b_{\rm ture}$.
Then, we add the white noise of level $\sigma=0.05$, i.e.,
$A=A_{\rm true}+\sigma E$, $b=b_{\rm true}+\sigma e$,
where $E$ and $e$ are generated from a standard normal
distribution. In our experiments,  the dimension  $n$ varies
from $20$ to $5000$.

The second one is an image deblurring example of a fixed dimension $n=1024$, see \cite{B06,BBT2006}. We generate the atmospheric turbulence blur matrix $A_{\rm true}\in\Bbb R^{n\times n}$ by implementing $blur(n,3)$, which is taken from \cite{H1994}. The true solution $x_{\rm true}\in \Bbb R^{n}$ is obtained by stacking the columns of $X\in\Bbb R^{32\times 32}$ one underneath the other and then normalizing it so that $\|x_{\rm true}\|=1$, where  $X\in\Bbb R^{32\times 32}$ is the following two dimensional image:
\[
X(z_1,z_2)=\sum_{l=1}^3 a_l\cos(w_{l,1}z_1+w_{l,2}z_2+\phi_l),1\leq z_1,z_2\leq 32,
\]
with the coefficients being given in Table 1 of \cite{B06}.  Let $b_{\rm true}=A_{\rm true}x_{\rm true}$. Then, the white noise is added, i.e.,  $A=A_{\rm true}+\sigma E$, $b=b_{\rm true}+\sigma e$, where $E$ and $e$ are generated from a standard normal distribution.  In our experiments, we let the level of the noise $\sigma$ vary in $\{0.01,0.03,0.05, 0.08,0.1,0.3,0.5,0.8,1.3,1.5,1.8,2.0\}$.

For the regularization matrix of the first example, we take $L=get\_l(n,1)$, which is given in  \cite{H1994}. For the second example, as in \cite{BBT2006}, we set the regularization matrix $L$ as the discrete approximation of the Laplace operator, which is standard in image processing \cite{J89}.
The regularization parameter $\rho$ is selected by using the L-curve method \cite{H1993}. It corresponds to the  L-shaped corner of  the norm $\|Lx\|^2$ versus the fractional residual $\|Ax-b\|^2/(\|x\|^2+1)$ for a various number of regularization parameters.

All the experiments are carried out in MATLAB R2014a and run
on a server with 2.6 GHz dual-core processor and 32 GB RAM. We set the tolerance parameter $\epsilon=10^{-6}$ for all the three algorithms.
For each setting of the dimension or the level of noise in the above two examples, we independently and randomly generate $10$ instances and then run the three algorithms. We report in Tables \ref{tab:2} and \ref{tab:3} the average of the numerical results for the $10$ times running, where the average computational time is recorded in seconds and  the symbol `\#iter' denotes the average of the number of iterations, i.e., the number of evaluating (TRS).

The numerical results demonstrate that, in most cases, our global optimization algorithm outperforms the improved version of the heuristic Algorithm TRTLSG \cite{B06}. Moreover, the larger the dimension or the level of noise is, the faster our global algorithms performs.  It is worth noting that with the modified stopping criterion (\ref{stop}), the improved Algorithm TRTLSG \cite{B06} requires much fewer iterations as the objective $\mathcal{G}(\alpha)$ is quite flat around the optimal solution $\alpha^*$. So, it is more time-consuming if the original simple stopping criterion $|\alpha_{\max}- \alpha_{\min}| > \epsilon_2$ is used.
It is observed that the number of the iterations of the improved Algorithm TRTLSG (though slightly) increases with the increase of either the dimension or the level of noise. However, for all instances we have tested, the number of the iterations of our new global optimization algorithm is never larger than twenty and seems to be independent of the dimension and the level of noise.

\begin{table}[h]\centering
\caption{The average of the numerical results for ten times solving
 the first example with different dimension $n$. } \label{tab:2}
\begin{center}
\begin{tabular}{rrrrr} \hline
  &   \multicolumn{2}{c}{Algorithm TRTLSG} & \multicolumn{2}{c}{Algorithm BTD} \\
  \cline{2-3}\cline{4-5}
n &\#~iter & time (s)&\#~iter & time (s) \\
\hline
20 & 16.0 & 0.02 & 17.0 & 0.02   \\
50   & 18.3 & 0.03 & 15.5& 0.03     \\
100   & 18.7 & 0.09 & 15.5 & 0.08   \\
200   & 18.9& 0.26  & 16.5 & 0.25   \\
500   & 20.0 & 2.81 & 16.8 & 2.64    \\
1000  & 20.5& 10.49  & 16.1 & 9.10   \\
1200  & 20.4 & 15.19 & 15.6  & 12.69  \\
1500  & 21.1 & 24.07 & 18.0 & 22.40   \\
1800  & 21.2& 35.97  & 17.8  & 33.67  \\
2000  & 20.8 & 43.88 & 17.8 & 43.10   \\
2500  & 20.7& 72.51  & 17.5 & 68.84    \\
3000  & 21.8 & 125.16 & 16.2 & 102.76   \\
4000  & 20.2& 255.86  & 14.0 & 202.95    \\
5000  & 20.0 & 448.39 & 14.5 & 366.50   \\
 \hline
 \hline
 \end{tabular}
 \end{center}
\end{table}

\begin{table}[h]\centering
\caption{The average of the numerical results for ten times solving
 the second example with a fixed dimension $n=1024$ and different level of noise $\sigma$.} \label{tab:3}
\begin{center}
\begin{tabular}{rrrrr} \hline
  &   \multicolumn{2}{c}{Algorithm TRTLSG} & \multicolumn{2}{c}{Algorithm BTD} \\
  \cline{2-3}\cline{4-5}
 $\sigma$  &\#~iter & time (s)&\#~iter & time (s) \\
\hline
0.01   & 17.2& 9.46  & 14.4& 8.60    \\
0.03   & 21.9& 12.33  & 16.6& 10.11    \\
0.05   & 16.2 & 8.91 & 17.0  & 10.52  \\
0.08  & 18.0& 10.04  & 17.0 & 10.68   \\
0.1   & 19.8 & 11.25 & 18.4 & 11.56   \\
0.3    & 29.4& 17.65   & 17.0& 10.94    \\
0.5    & 30.8 & 18.59 & 17.4& 11.19    \\
0.8    & 30.7& 20.52  & 15.9& 12.06    \\
1.0   & 31.4& 21.08  & 15.4 & 11.71   \\
1.3    & 32.2& 21.56  & 15.6 & 12.04   \\
1.5    & 32.0& 21.96   & 15.6 & 12.33   \\
1.8   & 33.6 & 23.11 & 16.1 & 13.04   \\
2.0   & 33.7 & 23.52  & 16.0& 13.03    \\
 \hline
 \hline
 \end{tabular}
 \end{center}
\end{table}

\section{Conclusions}

The total least squares problem with the general Tikhonov regularization (TRTLS) is a non-convex optimization problem with local non-global minimizers. It can be reformulated as a problem of minimizing the one-dimensional function $\mathcal{G}(\alpha)$ over an interval, where $\mathcal{G}(\alpha)$ is evaluated by solving an $n$-dimensional trust region subproblem. In literature, there is an efficient bisection-based heuristic algorithm for solving (TRTLS), denoted by Algorithm TRTLSG. It converges to the global optimal solution except for some exceptional examples with non-unimodal $\mathcal{G}(\alpha)$. In this paper, we firstly improve the lower and upper bounds on the norm of the globally optimal solution. It helps to greatly improve  the efficiency of Algorithm TRTLSG. For the global optimization of (TRTLS), we employ the adaptive branch-and-bound algorithm, based on a novel two-layer dual approach for underestimating $\mathcal{G}(\alpha)$ over any given interval. Our new algorithm(Algorithm BTD) guarantees to find a global $\epsilon$-approximation  solution in at most $O(1/\epsilon)$ iterations and the computational effort in each iteration is $O(n^3\log(1/\epsilon))$.
Under the same assumptions as in Algorithm TRTLSG, the number of iterations of our new algorithm can be further reduced to $O(1/\sqrt{\epsilon})$.
In our experiments,  the practical iteration numbers are always less than twenty and seem to be independent of the dimension and the level of noise.
Numerical results demonstrate that our global optimization algorithm is even faster than the improved version of Algorithm TRTLSG, which is a bisection-based heuristic algorithm.
It is the future work to extend our novel underestimation approach to globally solve more  structured non-convex optimization problems.

\end{document}